\newtheorem{proposition}{Proposition}
\newtheorem{definition}{Definition}
\newtheorem{lemma}{Lemma}
\newtheorem{theorem}{Theorem}
\newtheorem{remark}{Remark}
\newtheorem{assumption}{Assumption}
\numberwithin{equation}{section}
\title{A Dynamic Game Approach to Distributionally Robust  Safety Specifications for Stochastic Systems} 
\author{
 Insoon Yang\thanks{Department of Electrical Engineering, University of Southern California, Los Angeles, CA 90089, USA (insoonya@usc.edu). }
}
\date{}
\providecommand{\keywords}[1]{\textbf{Key words.} #1}
\begin{document}
\maketitle


\pagestyle{myheadings}
\thispagestyle{plain}

\begin{abstract}
This paper presents a new safety specification method that is robust against errors in the probability distribution of disturbances.
Our proposed distributionally robust safe policy maximizes the probability of a system remaining in a desired set for all times, subject to the worst possible disturbance distribution in an ambiguity set.
We propose a dynamic game formulation of constructing such policies 
and identify conditions under which a non-randomized Markov  policy is optimal. 
Based on this existence result, we develop a practical design approach to safety-oriented stochastic controllers with limited information about disturbance distributions.
This control method can be used to minimize another cost function while
ensuring safety in a probabilistic way. 
However, an associated Bellman equation involves infinite-dimensional minimax optimization problems since the disturbance distribution may have a continuous density.
To resolve computational issues, we propose a duality-based  reformulation method that converts the infinite-dimensional minimax problem into a semi-infinite program that can be solved using existing convergent algorithms. 
We prove that there is no duality gap, and that this approach thus preserves optimality. 
The results of numerical tests confirm that the proposed method is robust against distributional errors in disturbances, while a standard stochastic safety specification tool is not.
\end{abstract}

\keywords{
Distributionally robust optimization, safety specifications, reachability analysis, distributional ambiguity, stochastic systems, dynamic games, duality.}

\section{Introduction} \label{intro}

Various critical decision-making and control problems associated with engineering and socio-technical systems are subject to uncertainties. Large-scale data collected from the Internet of Things and cyber-physical systems can provide information about the probability distribution of these uncertainties. 
Statistical learning and filtering methods also support the construction of data-driven distribution models of uncertainty based on the observed data.
Such distributional information can be used to dramatically improve the performance of closed-loop systems if they adopt appropriate controllers, that reduce the conservativeness of classical techniques, such as robust control. 
Several concerns have been raised about how best to incorporate the collected data into critical control and decision-making problems. These concerns center on safety, risk, robustness and reliability because the data and statistical models extracted from the data often result in inaccurate distributional information. 
Among them, we focus on the safety issue and develop a safety specification and management tool with ambiguous information about the probability distribution of disturbances.

For safety-critical systems subject to uncertain disturbances, reachability-based safety specification techniques have been used to compute the reachable sets and safe sets, which allow one to verify that a system is evolving within a safe range of operation and to synthesize controllers to satisfy safety constraints (e.g., \cite{Bertsekas1971}, \cite{Tomlin1998}, \cite{Lygeros1999}, \cite{Cardaliaguet1999}, \cite{Kurzhanski2002}, \cite{Prajna2004}, \cite{Girard2005}, \cite{Rakovic2006}, \cite{Althoff2011}, \cite{Margellos2011}, \cite{Ghaemi2014}, \cite{Chen2016}).
These methods assume that disturbances lie in a compact set, and thus  require information only about the support of disturbances. 
However, these techniques often produce conservative results as no additional information about the probability distribution of uncertain disturbances is used. These deterministic methods are a natural choice when the data of disturbances are not continuously collected, and thus a reliable stochastic model is not available for them. 
Advances in sensing, communication, and computing technologies as well as statistical learning and estimation tools make it possible to shift this paradigm; sensors, data storage and computing infrastructure in various systems can now provide data to help estimate the probability distribution of disturbances. 
Stochastic reachability analysis tools are based on the assumption that the full probability distribution of disturbances is available 
and can often be used to reduce the conservativeness of deterministic safe set computations.
However, this assumption is often restrictive in practice because obtaining an accurate distribution requires large-scale high-resolution sensor measurements over a long training period or multiple periods.
Furthermore, the accuracy of the  distribution obtained with computational methods
 is often unreliable as it is subject to the quality of the observations, statistical learning or filtering methods, and prior knowledge about the disturbances.
Thus, probabilistic safety specification tools can be misleading and lead to the design of an unreliable controller that may violate safety constraints when distributional information about disturbances is inaccurate.

\begin{figure}
\begin{center}
\includegraphics[width=2.1in]{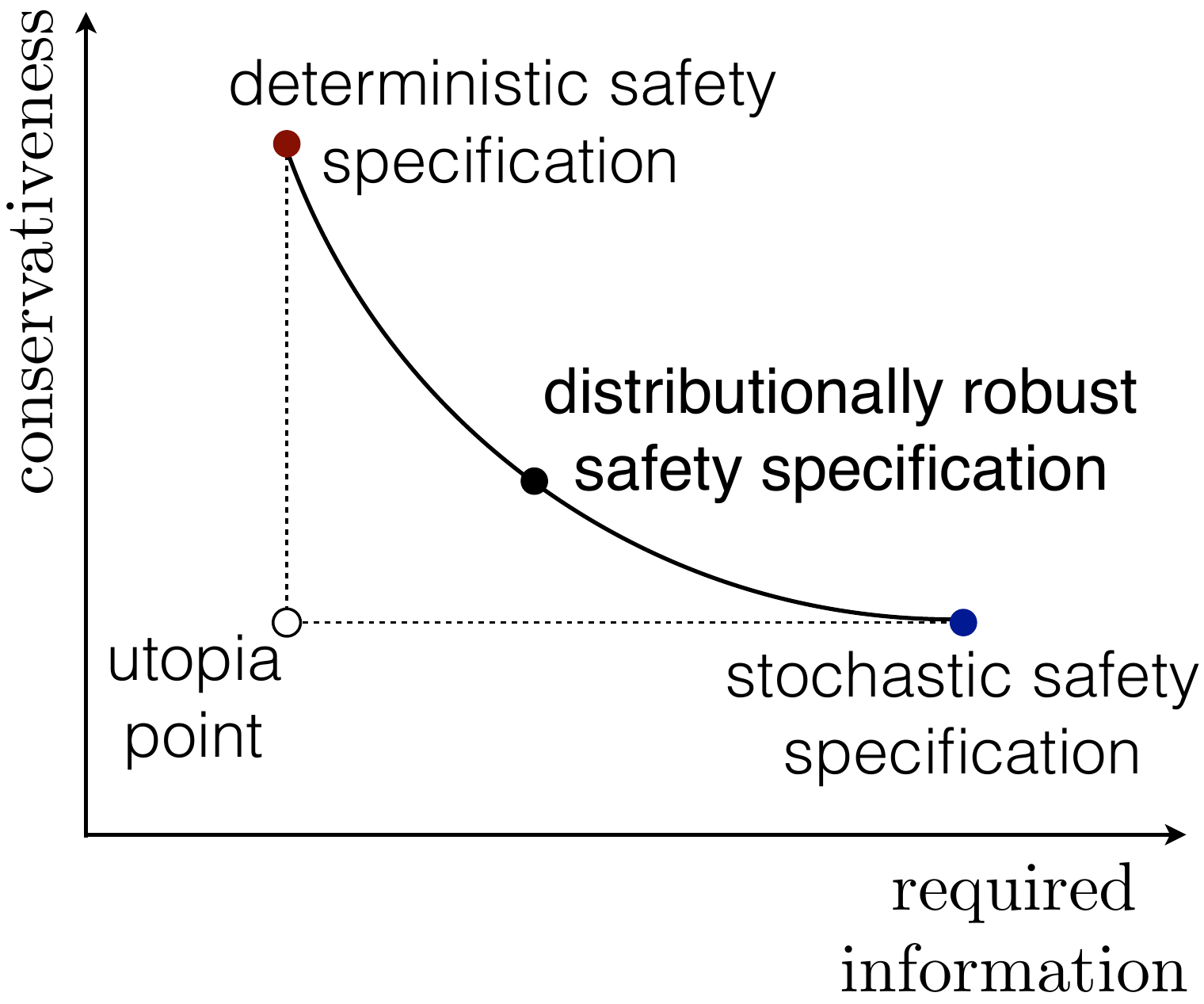}    
\caption{Tradeoff between required information and conservativeness.}  
\label{fig:trade}                                 
\end{center}                                 
\end{figure}

Fig. \ref{fig:trade} illustrates the tradeoff between required information and conservativeness in deterministic and stochastic safety specification methods.
This study aims to bridge the gap between the two methods by proposing a \emph{distributionally robust safety specification} tool. 
Our approach assumes that the  distribution of disturbances is not fully known but lies in a so-called \emph{ambiguity set} of probability distributions.
If, for example, only the support, mean, and variance of an uncertain variable are reliably estimated, the ambiguity set can be chosen so as to contain distributions consistent with the empirical estimates.
The proposed \emph{distrbutionally robust safe policy} maximizes the probability of a system remaining within a desired set for all times subject to the worst possible disturbance distribution in the ambiguity set.
Therefore, the probabilistic safe set of the closed-loop system is robust against distributional errors within the ambiguity set.

This paper proposes a dynamic game formulation of constructing distributionally robust safe policies and safe sets.
Specifically, it is a two-player zero-sum dynamic game in which Player I selects a policy by which the controller can maximize the probability of safety, while (fictitious) Player II adversarially determines a strategy for the probability distribution of disturbances to minimize the same probability. 
Player II's action space is generally infinite dimensional since the disturbances may have a continuous density function. 
Therefore, the Bellman equation for this dynamic game problem involves infinite-dimensional optimization problems that are computationally challenging. 
Furthermore, the existence of a distributionally robust safe policy is not guaranteed. 

The contributions of this work are threefold. 
First, we characterize conditions under which a non-randomized Markov policy is optimal for Player I (controller).
This characterization helps greatly reduce the control strategy space we need to search for because it is enough to restrict our attention to non-randomized Markov policies.
Furthermore, the existence of a non-randomized Markov policy guarantees that the outer maximization problem (for Player I) of an associated Bellman equation is solvable---that is, it is feasible and has an optimal solution. 
Second, we develop a design approach to 
a safety-oriented stochastic controller with limited information about disturbance distributions.
This control method can be used to minimize another cost function while
guaranteeing that the probability for a system being safe for all remaining stages is greater than or equal to a pre-specified threshold, regardless of how the disturbance distribution is chosen in an ambiguity set.
Third, we propose a duality-based reformulation method for the Bellman equation in cases with moment uncertainty.
We show that  there is no duality gap in 
the inner minimization problem (for Player II) of the Bellman equation, which is an infinite-dimensional optimization problem.
Using the strong duality result, we reformulate each infinite-dimensional minimax problem in the Bellman equation 
 as a semi-infinite program
without sacrificing optimality. 
This reformulation alleviates the computational issue arising from the infinite dimensionality of the original Bellman equation because the reformulated Bellman equation can be solved through the use of existing convergent algorithms for semi-infinite programs.

\subsection{Related Work}

A probabilistic reachability tool for stochastic differential equations with jumps has been proposed; it uses a Markov chain approximation to propagate the transition probabilities of the Markov chain backward in time starting from a target set \cite{Hu2005}, \cite{Prandini2006}, \cite{Prandini2006b}. In \cite{Prajna2007}, barrier certificates are employed to calculate an upper bound of the probability that a system will reach a target set. Additionally, \cite{Mitchell2005b} proposes a toolbox that supports expectation-based reachability problems associated with a class of continuous-time stochastic (hybrid) systems by extending the celebrated Hamilton--Jacobi--Isaacs reachability analysis \cite{Tomlin2000}, \cite{Mitchell2005}. 
A partial differential equation characterization of continuous-time stochastic reach-avoid problems is studied in \cite{Esfahani2016} based on the theory of discontinuous viscosity solutions.
For discrete-time stochastic hybrid systems, an elegant dynamic programming approach has been proposed to compute the maximal probability of safety \cite{Abate2008}. This method has been extended to stochastic reach--avoid problems \cite{Summers2010}, stochastic hybrid games \cite{Ding2013}, and partially observable stochastic hybrid systems \cite{Lesser2014}, \cite{Lesser2016}. However, all the aforementioned methods are based on the possibly restrictive assumption that the probability distribution of disturbances is completely known.

This work also closely relates to \emph{distributionally robust control}, which is an emerging stochastic control method.
This method is based on single-stage distributionally robust stochastic optimization that minimizes the worst-case cost, assuming that the probability distribution of uncertain variables lies within an ambiguity set of distributions (e.g., \cite{Scarf1958}, \cite{Dupacova1987}, \cite{ElGhaoui2003}, \cite{Calafiore2006}, \cite{Delage2010}, \cite{Wiesemann2014}). 
For multi-stage problems, a distributionally robust Markov decision process (MDP) formulation has recently been developed while focusing on finite-state, finite-action MDPs \cite{Xu2012}, \cite{Yu2016}.
For cases with moment uncertainty, \cite{VanParys2016} investigates  linear feedback strategies in linear-quadratic settings with risk constraints and proposes a semidefinite programming approach.
We extend the theory of distributionally robust control to the case of continuous state spaces and apply it to reachability analysis and safety specifications.

\subsection{Organization}

The remainder of this paper is organized as follows. 
In Section \ref{setting}, we define distributionally robust safe sets and policies and introduce a dynamic game formulation to construct them.
Section \ref{dp_sol} contains a dynamic programming solution to the problem of computing optimal policies.  
In particular, we  characterize conditions under which a Markov control policy is optimal, and propose a safety-oriented controller design method.
In Section \ref{dual_Bellman}, we consider ambiguity sets with moment uncertainty and show that an associated Bellman equation can be formulated as a semi-infinite program. 
An application of the proposed safety specification tool is illustrated through examples in Section \ref{ex}.

\subsection{Notation}

Given a Borel space $X$, $\mathcal{B}(X)$ represents its Borel $\sigma$-algebra.
Given Borel spaces $X$ and $Y$, $Q(A | y)$ denotes
a transition probability (or a stochastic kernel) from $Y$ to $X$, where $Q(\cdot | y)$ is a probability measure on $\mathcal{B}(X)$ for each $y \in Y$ and $Q(A | \cdot)$ is a measurable function on $Y$ for each $A \in \mathcal{B}(X)$. 
Also, $M(X)$ denotes the Banach space of finite signed measures  on $\mathcal{B}(X)$ and $M_+(X)$ represents its positive cone.
For simplicity, we use the following notation of time indexes: $\mathcal{T}:= \{0, 1, \cdots, T-1\}$ and $\bar{\mathcal{T}} := \{0, 1, \cdots, T \}$.

\section{Distributionally Robust Safe Sets and Policies}\label{setting}

\subsection{Stochastic Systems with Distributional Ambiguity}
Consider a discrete-time stochastic system of the form
\begin{equation}\label{sys}
x_{t+1} = f(x_t, u_t, w_t) \quad t \in \mathcal{T}, \quad x_0 = \bm{x},
\end{equation}
where $x_t \in \mathbb{R}^n$ is the state, $u_t \in \mathbb{R}^m$ is the control input, $w_t \in \mathbb{R}^l$ is the stochastic disturbance, and $f: \mathbb{R}^n \times \mathbb{R}^m \times \mathbb{R}^l \to \mathbb{R}^n$ is a measurable function. We assume that the disturbance process $\{w_t\}_{t=0}^{T-1}$ is  defined on a probability space $(\Omega, \mathcal{F}, \mathbb{P})$, and that $w_s$ and $w_t$ are independent for any $s \neq t$.
As mentioned in Section~\ref{intro}, it is often difficult to obtain full information about the probability distribution $\mu_t$ of $w_t$. 
In many cases, only estimates of its support, mean, and variance are available. Furthermore, such estimates are rarely accurate. 
This issue of imperfect disturbance distributions degrades the practicality of classical stochastic reachability tools.
To resolve this problem, we  allow errors in such distributional information and quantify the probability of safety with the worst-case disturbance distribution. 
To mathematically model distributional ambiguity, we assume that the true probability distribution $\mu_t$ of $w_t$ is contained in a so-called \emph{ambiguity set} of distributions, denoted by $\mathbb{D}_t$.
Note that $\mathbb{D}_t$ is a subset of the space $M_+(\mathbb{R}^l)$ of signed measures and thus is infinite dimensional.
An example of such ambiguity sets can be found in Section \ref{amb_moment}. We assume that the  set $\mathbb{D}_t$ of distributions is not empty for each~$t$.

We now briefly discuss admissible control and disturbance distribution strategies. 
Let $H_t$ be the set of histories up to stage $t$, whose element takes the form $h_t = (x_0, u_0, \mu_0, \cdots, x_{t-1}, u_{t-1}, \mu_{t-1}, x_t)$. 
The set of admissible control strategies is chosen as $\Pi := \{ \pi = (\pi_0, \cdots, \pi_{T-1}) \: | \: \pi_t(\mathbb{U}(x_t) | h_t) = 1 \; \forall h_t \in H_t \}$, where $\pi_t$ is a stochastic kernel from $H_t$ to $\mathbb{R}^m$ and $\mathbb{U}(x_t)$ is the set of admissible actions given state $x_t$.
Note that this strategy space is sufficiently broad to contain randomized non-Markov policies. We assume that there exists a measurable function $\pi : \mathbb{R}^n  \to \mathbb{R}^m$ such that $\pi(\bm{x}) \in \mathbb{U}(\bm{x})$ for all $\bm{x} \in \mathbb{R}^n$. 
By viewing the disturbance as an adversarial player who chooses the disturbance's probability distribution based on the available information, the set of admissible disturbance distribution strategies is similarly defined as
$\Gamma := \{ \gamma = (\gamma_0, \cdots, \gamma_{T-1}) | \gamma_t(\mathbb{D}_t | h_t^e) = 1 \; \forall h_t^e \in H_t^e\}$, where $H_t^e$ is an extended set of histories up to stage $t$, whose element is of the form
$h_t^e = (x_0, u_0, \mu_0, \cdots, x_{t-1}, u_{t-1}, \mu_{t-1}, x_t, u_t)$.  Note that the distributional constraints in the ambiguity set $\mathbb{D}_t$ is encoded in this strategy space.
The disturbance (or the adversarial player) can use slightly more information than the controller; the disturbance is aware of the controller's action at stage $t$, $u_t$, in addition to the history $h_t$.
However, the controller cannot be aware of the disturbance distribution's realization $\mu_t$ when making a decision at stage $t$.

\subsection{Distributionally Robust Safety Specifications}

Our goal is to develop a probabilistic safety specification tool that is robust against errors in the probability distribution of the disturbance $\{w_t\}$. Specifically, we compute the worst-case probability of a system remaining in a desired set for all times  when the  distribution of $w_t$ is not fully known but lies within an ambiguity set, $\mathbb{D}_t$.
The proposed \emph{distributionally robust} safety specification tool will be used to design a controller for safety-critical stochastic systems under imperfect information about disturbance distributions.

To formulate a concrete safety specification problem, 
we consider a desired set $A$ for safety, which is an arbitrary compact Borel set in the state space $\mathbb{R}^n$.
We also introduce the following definition of a probabilistic safe set:
\begin{definition}[Probabilistic Safe Set]
We define the probability that the system \eqref{sys} is safe for all $t \in \bar{\mathcal{T}}$
given the strategy pair $(\pi, \gamma)$ and the initial value $\bm{x}$ as
\begin{equation}
P_{\bm{x}}^{\mbox{\tiny \emph{safe}}} (\pi, \gamma; A) :=
\mathbb{P}^{\pi, \gamma} \{x_t \in A \;\; \forall t \in \bar{\mathcal{T}}, \;\; x_0 = \bm{x} \},
\end{equation}
which we call  the \emph{probability of safety} for the set $A$. 
We also define the \emph{probabilistic safe set} with probability $\alpha$ 
under $(\pi, \gamma)$ as the set
\begin{equation}\nonumber
S_\alpha (\pi, \gamma; A) := \{ \bm{x} \in \mathbb{R}^n |  P_{\bm{x}}^{\mbox{\tiny \emph{safe}}} (\pi, \gamma; A) \geq \alpha \}.
\end{equation}
\end{definition}
This set contains all the initial states such that the probability that the system stays in the  set $A$ is greater than or equal to $\alpha$ given the strategy pair $(\pi, \gamma)$.
This definition generalizes the probabilistic safe set introduced in Abate et al. \cite{Abate2008} to the case with ambiguous disturbance distributions.
Using these notions, we now define a distributionally robust safe policy and set as follows:
\begin{definition}[Distributionally Robust Safe  Set]
A control strategy $\pi^\star \in \Pi$ is said to be a \emph{distributionally robust safe policy} given $x_0 = \bm{x}$ if it satisfies 
\begin{equation} \nonumber
\inf_{\gamma \in \Gamma} P_{\bm{x}}^{\mbox{\tiny \emph{safe}}} (\pi^\star, \gamma; A)
\geq \inf_{\gamma' \in \Gamma} P_{\bm{x}}^{\mbox{\tiny \emph{safe}}} (\pi, \gamma'; A)\quad \forall \pi \in \Pi.
\end{equation}
The set $S_\alpha^\star(A)$ is said to be the \emph{distributionally robust safe set} for $A$ with probability $\alpha$  if 
\[
S_\alpha^\star(A) = \{\bm{x} \in \mathbb{R}^n | 
\sup_{\pi \in \Pi} \inf_{\gamma \in \Gamma} P_{\bm{x}}^{\mbox{\tiny \emph{safe}}} (\pi, \gamma; A) \geq \alpha \}.
\]
\end{definition}
In other words, the distributionally robust safe policy $\pi^\star$ maximizes the worst-case probability of safety under distributional ambiguity characterized by the constraints in the  set $\mathbb{D}_t$.
No matter what form the strategy $\gamma$ takes so that the realized distribution lies in the ambiguity set $\mathbb{D}_t$, the probability that the system starting from $\bm{x} \in S_\alpha^*(A)$ stays safe is greater than or equal to $\alpha$ under the distributionally robust safe policy $\pi^\star$.
Once we obtain $\pi^\star$ and $P_{\bm{x}}^{\mbox{\tiny safe}}$ for each $\bm{x}$, we can calculate the distributionally robust safe set $S_\alpha^\star(A)$ through  simple thresholding.
To this end, in the next subsection, we consider a game theoretic formulation to construct a distributionally robust safe policy.

\subsection{A Dynamic Game Formulation}

Let $\bold{1}_A : \mathbb{R}^n  \to \{0, 1\}$ be the indicator function of the set $A \subseteq \mathbb{R}^n$ such that $\bold{1}_A(\bm{x}) = 1$ if $\bm{x} \in A$ and $\bold{1}_A(\bm{x}) = 0$ otherwise.
Then, given $x_0 = \bm{x}$, we have
\begin{equation} \label{prob_safe}
P_{\bm{x}}^{\mbox{\tiny safe}} (\pi, \gamma; A) =  \mathbb{E}^{\pi, \gamma} \bigg [
\prod_{t=0}^T \bold{1}_A(x_t)
 \bigg ],
\end{equation}
where $\mathbb{E}^{\pi, \gamma}$ is the expectation taken with respect to the probability measure $\mathbb{P}^{\pi, \gamma}$ induced by the strategy pair $(\pi, \gamma)$.
The problem of constructing a distributionally robust safe policy can then be formulated as
the following zero-sum  dynamic game problem:
\begin{equation} \label{reach}
\sup_{\pi \in \Pi} \inf_{\gamma \in \Gamma} P_{\bm{x}}^{\mbox{\tiny safe}} (\pi, \gamma; A).
\end{equation}
In this two-player  game, Player I determines a control policy $\pi$ to maximize the probability of safety assuming that Player II selects a disturbance distribution strategy $\gamma$ to minimize the  probability of safety.
Recall that information about the ambiguity set $\mathbb{D}_t$ of probability distributions is encoded in Player II's strategy space $\Gamma$.
In general, the action space of Player II at each stage is infinite-dimensional because the disturbance may have a continuous probability density.
Therefore, the Bellman equation associated with this dynamic game problem involves infinite-dimensional optimization problems, which are computationally challenging.  
To alleviate this computational issue, we  propose a duality-based approach to reformulate the Bellman equation as a semi-infinite program that can be solved by convergent algorithms.
In the next section, we first establish some analytical results about the dynamic game problem. 
In particular, we show that under mild conditions an associated value function is upper semi-continuous and 
a non-randomized Markov control policy is optimal.

\section{Dynamic Programming}\label{dp_sol}

\subsection{A Semi-Continuous Model}

We let $\mathbb{K}_t \in \mathcal{B}(\mathbb{R}^n \times \mathbb{R}^m \times M_+(\mathbb{R}^l))$ be the collection of elements such that each $(\bm{x}, \bm{u}, \bm{\mu}) \in \mathbb{K}_t$ satisfies $(i)$ $\bm{u} \in \mathbb{U}(\bm{x})$ and $(ii)$ $\bm{\mu} \in \mathbb{D}_t$. 
The stochastic kernel $Q_t(\bm{\xi} | \bm{x},  \bm{u}, \bm{\mu})$
represents the probability that the state of the system \eqref{sys} at stage $t+1$ is equal to $\bm{\xi} \in \mathbb{R}^n$ given $(x_t,u_t, \mu_t) = (\bm{x}, \bm{u}, \bm{\mu})$.

\begin{assumption}\label{sc}
The problem \eqref{reach} of constructing a distributionally robust safe policy satisfies the following conditions:
\begin{enumerate}[$(i)$]

\item For each bounded continuous function $g_t :\mathbb{R}^n  \to \mathbb{R}$, the function 
\[
\hat{g}_t(\bm{x},  \bm{u}, \bm{\mu}) := \int_{\mathbb{R}^n} g_t(\xi) Q_t(d\xi | \bm{x},  \bm{u}, \bm{\mu})
\]
is continuous on $\mathbb{K}_t$ for each $t \in \mathcal{T}$.

\item The set $\mathbb{U}(\bm{x})$ is compact for each $\bm{x} \in \mathbb{R}^n$. Furthermore, the set-valued mapping $\bm{x} \mapsto \mathbb{U}(\bm{x})$ is upper semi-continuous.

\item The ambiguity set $\mathbb{D}_t$ is $\sigma$-compact for each $t \in \mathcal{T}$.
\end{enumerate}
\end{assumption}

These conditions are standard \emph{measurable selection conditions} for semi-continuous stochastic control models (e.g., \cite{Dubins1965}, \cite{Hernandez2012}, \cite{Gonzalez2003}) and will be used to ensure the existence of a distributionally robust safe policy.
We will further show that a non-randomized Markov policy is optimal.
For each measurable function $\bold{v}$ on $\mathbb{R}^n$, we define a dynamic programming operator, denoted by $\bold{T}_t$, $t \in \mathcal{T}$, as
\[
\bold{T}_t \bold{v}(\bm{x}, \bm{y}) := \sup_{\bm{u} \in \mathbb{U}(\bm{x})} \inf_{\bm{\mu} \in \mathbb{D}_t}  \bold{1}_A(\bm{x})
 \int_{\mathbb{R}^n}  \bold{v}( \xi) Q_t (d \xi | \bm{x}, \bm{u}, \bm{\mu}).
\]
We can first show the following properties of the dynamic programming operator. In particular, the outer ``$\sup$'' problem has an optimal solution under a mild condition on $\bold{v}$.

\begin{lemma} \label{sc_lem}
Let $\bold{v}:\mathbb{R}^n \to \mathbb{R}$ be a measurable upper semi-continuous function with $\| \bold{v} \|_\infty < \infty$ and $\bold{v} \geq 0$. Then, 
\begin{enumerate}[$(i)$]
\item  $\bold{T}_t \bold{v}$ is upper semi-continuous.

\item  There exists a measurable function $\kappa: \mathbb{R}^n \to \mathbb{R}^m$ such that, for all $\bm{x} \in \mathbb{R}^n$, $\kappa(\bm{x}) \in \mathbb{U}(\bm{x})$ and
\[
\bold{T}_t\bold{v}(\bm{x}) = \inf_{\bm{\mu} \in \mathbb{D}_t} 
\left [
\bold{1}_A(\bm{x}) \int_{\mathbb{R}^n} \bold{v}(\xi) Q_t(d\xi | \bm{x}, \kappa(\bm{x}), \bm{\mu})
\right ].
\]
\end{enumerate}
\end{lemma}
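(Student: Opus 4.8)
The plan is to peel the operator $\bold{T}_t$ into three layers --- the inner integral, the inner infimum over $\bm{\mu}$, and the outer supremum over $\bm{u}$ --- and to establish upper semicontinuity at each layer before invoking a measurable selection theorem for the outer problem. Throughout I write $\hat{\bold{v}}(\bm{x},\bm{u},\bm{\mu}) := \int_{\mathbb{R}^n}\bold{v}(\xi)\,Q_t(d\xi\mid \bm{x},\bm{u},\bm{\mu})$, $\phi_t := \bold{1}_A\,\hat{\bold{v}}$, and $G_t(\bm{x},\bm{u}) := \inf_{\bm{\mu}\in\mathbb{D}_t}\phi_t(\bm{x},\bm{u},\bm{\mu})$, so that $\bold{T}_t\bold{v}(\bm{x})=\sup_{\bm{u}\in\mathbb{U}(\bm{x})}G_t(\bm{x},\bm{u})$. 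Observe that $\hat{\bold{v}}\ge 0$, since $\bold{v}\ge 0$ and $Q_t(\cdot\mid\cdot)$ is a probability measure; this nonnegativity is what will let me multiply by $\bold{1}_A$ without losing semicontinuity.

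The crux --- and the step I expect to be the main obstacle --- is to upgrade Assumption~\ref{sc}$(i)$, which gives continuity of $\hat{g}_t$ only for bounded \emph{continuous} $g_t$, to upper semicontinuity of $\hat{\bold{v}}$ for the merely upper semicontinuous $\bold{v}$. I would do this by monotone approximation. Since $\bold{v}$ is bounded and upper semicontinuous, the sup-convolutions $g_n(\bm{x}):=\sup_{\bm{y}}\big(\bold{v}(\bm{y})-n\|\bm{x}-\bm{y}\|\big)$ form a decreasing sequence of bounded Lipschitz (hence continuous) functions with $0\le\bold{v}\le g_n\le\|\bold{v}\|_\infty$ and $g_n\downarrow\bold{v}$ pointwise. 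By Assumption~\ref{sc}$(i)$ each $\hat{g}_n$ is continuous on $\mathbb{K}_t$, and by bounded convergence $\hat{g}_n\downarrow\hat{\bold{v}}$ pointwise on $\mathbb{K}_t$; a decreasing limit of continuous functions is upper semicontinuous, so $\hat{\bold{v}}$ is jointly upper semicontinuous on $\mathbb{K}_t$. Because $A$ is compact, hence closed, $\bold{1}_A$ is upper semicontinuous, and as both factors are nonnegative and bounded the product $\phi_t=\bold{1}_A\hat{\bold{v}}$ is upper semicontinuous on $\mathbb{K}_t$.

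Next I would pass the infimum over $\bm{\mu}$. For each fixed $\bm{\mu}\in\mathbb{D}_t$ the section $(\bm{x},\bm{u})\mapsto\phi_t(\bm{x},\bm{u},\bm{\mu})$ is upper semicontinuous, and a pointwise infimum of an arbitrary family of upper semicontinuous functions is upper semicontinuous, since its strict sublevel sets are unions of open sets. Hence $G_t$ is upper semicontinuous on $\{(\bm{x},\bm{u}):\bm{u}\in\mathbb{U}(\bm{x})\}$, and in particular Borel measurable; Assumption~\ref{sc}$(iii)$ guarantees that $\mathbb{D}_t$, and therefore $\mathbb{K}_t$, is a Borel set, so that this object is well defined throughout. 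Note that the inner infimum is in general not attained, because $\phi_t$ is upper rather than lower semicontinuous in $\bm{\mu}$; this is consistent with the statement keeping $\inf_{\bm{\mu}}$ in place in part~$(ii)$.

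Finally I would resolve the outer supremum via a maximum and selection theorem for semicontinuous models. By Assumption~\ref{sc}$(ii)$ the correspondence $\bm{x}\mapsto\mathbb{U}(\bm{x})$ is compact-valued and upper semicontinuous, and combined with the upper semicontinuity of $G_t$, Berge's maximum theorem yields that $\bold{T}_t\bold{v}(\bm{x})=\max_{\bm{u}\in\mathbb{U}(\bm{x})}G_t(\bm{x},\bm{u})$ is attained and upper semicontinuous, which is $(i)$. The same hypotheses invoke the standard measurable selection theorem for upper semicontinuous stochastic control models (e.g., \cite{Hernandez2012}, \cite{Gonzalez2003}) to produce a Borel measurable maximizer $\kappa:\mathbb{R}^n\to\mathbb{R}^m$ with $\kappa(\bm{x})\in\mathbb{U}(\bm{x})$ and $G_t(\bm{x},\kappa(\bm{x}))=\bold{T}_t\bold{v}(\bm{x})$, which is exactly $(ii)$. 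The boundedness and nonnegativity of $\bold{v}$ enter only to justify the approximation and the product step, so no regularity beyond the stated hypotheses is needed.
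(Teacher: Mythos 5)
Your proof is correct and takes essentially the same route as the paper's: both upgrade Assumption~\ref{sc}~$(i)$ from continuous to upper semicontinuous integrands via a decreasing sequence of bounded continuous functions converging pointwise to $\bold{v}$, multiply by the upper semicontinuous indicator $\bold{1}_A$, and then obtain upper semicontinuity of $\bold{T}_t\bold{v}$ together with a measurable maximizer from semicontinuity and measurable-selection machinery under Assumption~\ref{sc}~$(ii)$--$(iii)$. The only difference is one of self-containedness rather than of approach: the paper merely asserts the existence of the approximating sequence and delegates the entire minimax/selection step to Lemma~3.2(b) of \cite{Gonzalez2003}, whereas you construct the sequence explicitly (sup-convolutions) and unpack that final step into three standard facts (a pointwise infimum of upper semicontinuous functions is upper semicontinuous, the upper semicontinuous version of Berge's maximum theorem, and a measurable selection theorem), all of which you apply correctly.
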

\begin{proof}
Define a function $\hat{\bold{v}}: \mathbb{R}^n \times \mathbb{R}^m \times M_+(\mathbb{R}^l) \to \mathbb{R}$ as
\[
\hat{\bold{v}}(\bm{x}, \bm{u}, \bm{\mu}) :=
\int_{\mathbb{R}^n} \bold{v}(\xi) Q_t(d\xi | \bm{x}, \bm{u}, \bm{\mu}).
\]
Since $\bold{v}$ is a measurable, upper semi-continuous and nonnegative function with finite $L^\infty$-norm, there exists a sequence $\{ \bold{v}_k\}$ such that $\bold{v}_k \downarrow \bold{v}$ pointwise and each $\bold{v}_k$ is a bounded continuous function.
Thus, for all $k$, $\int_{\mathbb{R}^n} \bold{v}(\xi) Q_t(d\xi | \bm{x}, \bm{u}, \bm{\mu}) \leq \int_{\mathbb{R}^n} \bold{v}_k(\xi) Q_t(d\xi | \bm{x}, \bm{u}, \bm{\mu})$, and for any $(\bm{x}_j, \bm{u}_j, \bm{\mu}_j) \to (\bm{x}, \bm{u}, \bm{\mu})$ we have that for all $k$
\begin{equation} \nonumber
\begin{split}
&\lim \sup_{j \to \infty} \int_{\mathbb{R}^n} \bold{v}(\xi) Q_t(d\xi | \bm{x}_j, \bm{u}_j, \bm{\mu}_j) \leq \int_{\mathbb{R}^n} \bold{v}_k(\xi) Q_t(d\xi | \bm{x}, \bm{u}, \bm{\mu}) 
\end{split}
\end{equation}
due to Assumption \ref{sc} $(ii)$.
Letting $k \to \infty$, we  conclude that $\hat{\bold{v}}$ is   upper semi-continuous. 
Since $\bold{1}_A: \mathbb{R}^n \to \mathbb{R}$ is upper semi-continuous with a compact set $A$, 
$(\bm{x}, \bm{u}, \bm{\mu}) \mapsto \bold{1}_A(\bm{x}) \hat{\bold{v}}(\bm{x}, \bm{u}, \bm{\mu})$
is upper semi-continuous as well.
Furthermore, for all $(\bm{x}, \bm{u}, \bm{\mu}) \in \mathbb{K}_t$,
\begin{equation} \nonumber
\begin{split}
|\bold{1}_A(\bm{x}) \hat{\bold{v}}(\bm{x}, \bm{u}, \bm{\mu})| &= 
\bold{1}_A(\bm{x}) \int_{\mathbb{R}^n} |\bold{v}(\xi)| Q_t(d\xi | \bm{x}, \bm{u}, \bm{\mu})\\
& \leq \| \bold{v} \|_\infty < \infty.
\end{split}
\end{equation}
Thus, by Lemma 3.2. (b) in \cite{Gonzalez2003} have that
$\bold{T}_t \bold{v}$ is upper semi-continuous and that there exists a measurable function $h:\mathbb{R}^n \to \mathbb{R}^m$ such that 
$\bold{T}_t\bold{v}(\bm{x}) = \inf_{\bm{\mu} \in \mathbb{D}_t} 
\left [
\bold{1}_A(\bm{x}) \int_{\mathbb{R}^n} \bold{v}(\xi) Q_t(d\xi | \bm{x}, \kappa(\bm{x}), \bm{\mu})
\right ]$ and $\kappa(\bm{x}) \in \mathbb{U}(\bm{x})$ for all $\bm{x} \in \mathbb{R}^n$ under Assumption~\ref{sc}~$(iii)$ and $(iv)$.
\end{proof}

For each $t \in \mathcal{T}$, we define the value function of the distributionally robust safe control problem \eqref{reach} as
\[
v_t(\bm{x}) := \bold{T}_t \circ \bold{T}_{t+1} \circ \cdots \circ \bold{T}_{T-1} \bold{1}_A(\bm{x}).
\]
It represents the maximal worst-case probability of the system being safe from stage $t$ to $T$ when $x_t = \bm{x}$. For $t = T$, the value function is defined as $v_T(\bm{x}) = \bold{1}_A(\bm{x})$.
By definition, $v_0 (x_0) = \sup_{\pi \in \Pi} \inf_{\gamma \in \Gamma} P_{x_0}^{\mbox{\tiny safe}} (\pi, \gamma; A)$ for some initial state $x_0$.
Setting $\bold{v} = v_{t+1}$ in Lemma \ref{sc_lem}, 
we can show that the distributionally robust safe control problem \eqref{reach} admits a non-randomized Markov policy, which is optimal. 

\begin{theorem}[A Markov policy is optimal]\label{exist}
Suppose that Assumption \ref{sc} holds. For each $t \in \mathcal{T}$, there exists a measurable function $\phi_t : \mathbb{R}^n \to \mathbb{R}^m$ such that $\phi_t(\bm{x}) \in \mathbb{U}(\bm{x})$ and
\[
v_t(\bm{x}) = \inf_{\bm{\mu} \in \mathbb{D}_t} \left [ \bold{1}_A(\bm{x})
\int_{\mathbb{R}^n} v_{t+1} (\xi) Q_t(d\xi | \bm{x}, \phi_t(\bm{x}), \bm{\mu}) 
\right ]
\]
for all $\bm{x} \in \mathbb{R}^n$. The non-randomized Markov policy $\pi^\star := (\phi_0, \cdots, \phi_{T-1}) \in \Pi$ is a distributionally robust safe policy, i.e.,
\[
v_0(\bm{x}) = \inf_{\gamma \in \Gamma} P_{\bm{x}}^{\emph{\mbox{\tiny safe}}} (\pi^\star, \gamma; A). 
\]
Furthermore, the value function $v_t$ is upper semi-continuous for each $t \in \bar{\mathcal{T}}$.
\end{theorem}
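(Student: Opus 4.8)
The plan is to establish all three claims by a single backward induction on $t$, using Lemma~\ref{sc_lem} as the one-stage engine, and then to verify optimality of the resulting policy by a second backward induction. The induction hypothesis at stage $t+1$ is that $v_{t+1}$ is upper semi-continuous (hence Borel measurable), nonnegative, and bounded by $1$. The base case $t=T$ holds because $v_T=\bold{1}_A$ is the indicator of the compact, hence closed, set $A$, which is upper semi-continuous, nonnegative, and bounded by $1$. For the inductive step I would apply Lemma~\ref{sc_lem} with $\bold{v}=v_{t+1}$: the hypothesis supplies exactly the conditions the lemma requires, so part~(i) gives that $v_t=\bold{T}_t v_{t+1}$ is upper semi-continuous and part~(ii) produces a measurable selector $\phi_t$ with $\phi_t(\bm{x})\in\mathbb{U}(\bm{x})$ attaining the outer supremum, i.e. yielding precisely the stated Bellman identity for $v_t$. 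To close the induction I would check that $v_t$ inherits the remaining bounds: since $0\le v_{t+1}\le 1$ and $Q_t(\cdot\mid\bm{x},\bm{u},\bm{\mu})$ is a probability measure, the inner integral lies in $[0,1]$, and multiplication by $\bold{1}_A(\bm{x})\in\{0,1\}$ together with the sup--inf operations keeps $v_t$ in $[0,1]$. This simultaneously yields the selectors $\{\phi_t\}$ (first claim) and the upper semi-continuity of every $v_t$, $t\in\bar{\mathcal{T}}$ (third claim).

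For the optimality claim I would show that the Markov policy $\pi^\star=(\phi_0,\dots,\phi_{T-1})$ attains the worst-case probability $v_0$. Since the excerpt already identifies $v_0(\bm{x})=\sup_{\pi}\inf_{\gamma}P^{\text{safe}}_{\bm{x}}(\pi,\gamma;A)$, it suffices to prove $\inf_{\gamma}P^{\text{safe}}_{\bm{x}}(\pi^\star,\gamma;A)=v_0(\bm{x})$, because any policy achieving the upper value is optimal for Player~I. To this end I fix $\pi^\star$ and define the fixed-policy value-to-go $J_t^{\pi^\star}(\bm{x}):=\inf_{\gamma}\mathbb{E}^{\pi^\star,\gamma}[\prod_{s=t}^{T}\bold{1}_A(x_s)\mid x_t=\bm{x}]$, then prove $J_t^{\pi^\star}=v_t$ by a second backward induction. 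The base case $J_T^{\pi^\star}=\bold{1}_A=v_T$ is immediate. For the step, with Player~I's choice frozen at the Markov map $\phi_t$, Player~II faces a pure minimizing stochastic control problem, whose dynamic programming principle gives $J_t^{\pi^\star}(\bm{x})=\inf_{\bm{\mu}\in\mathbb{D}_t}\bold{1}_A(\bm{x})\int_{\mathbb{R}^n}J_{t+1}^{\pi^\star}(\xi)\,Q_t(d\xi\mid\bm{x},\phi_t(\bm{x}),\bm{\mu})$. Substituting the induction hypothesis $J_{t+1}^{\pi^\star}=v_{t+1}$ and then invoking the defining property of $\phi_t$ from the first induction (that it attains the outer supremum of $\bold{T}_t v_{t+1}$) collapses the right-hand side to $\bold{T}_t v_{t+1}(\bm{x})=v_t(\bm{x})$. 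Taking $t=0$ yields $\inf_{\gamma}P^{\text{safe}}_{\bm{x}}(\pi^\star,\gamma;A)=v_0(\bm{x})$, which is the desired conclusion.

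The main obstacle is the fixed-policy dynamic programming principle invoked in the second induction: I must justify that restricting Player~II from arbitrary history-dependent, randomized strategies $\gamma\in\Gamma$ to stagewise choices $\bm{\mu}\in\mathbb{D}_t$ incurs no loss, and that the inner infimum is attained in a measurable fashion so that each $J_t^{\pi^\star}$ is well defined. The cleanest route is to observe that once $\pi^\star$ is fixed, the problem is a minimizing Markov control model with action set $\mathbb{D}_t$ and transition kernel $Q_t(\cdot\mid\bm{x},\phi_t(\bm{x}),\bm{\mu})$, so the very semi-continuity and $\sigma$-compactness conditions of Assumption~\ref{sc} that underlie Lemma~\ref{sc_lem} again supply the measurable-selection machinery needed here; the standard backward induction for such models then validates the stagewise recursion and the interchange of the infimum with the stage structure. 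A secondary, more routine matter is the measurability bookkeeping---confirming at each stage that $v_t$ is Borel measurable (which is automatic from upper semi-continuity) so that every integral $\int v_{t+1}\,dQ_t$ is well defined---which I would record in passing during the first induction.
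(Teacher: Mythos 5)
Your proposal is correct and follows essentially the same route as the paper: a backward induction showing each $v_t$ is upper semi-continuous, measurable, and bounded so that Lemma~\ref{sc_lem} applies at every stage, with part~(ii) supplying the measurable selectors $\phi_t$. Your second induction on the fixed-policy value-to-go $J_t^{\pi^\star}$ is a fleshed-out version of the paper's one-line appeal to ``the dynamic programming principle,'' and it correctly identifies and addresses the only point the paper glosses over---that freezing Player~I at $\pi^\star$ reduces the problem to a minimizing Markov control model for Player~II, to which the measurable-selection conditions of Assumption~\ref{sc} again apply.
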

\begin{proof}
We first show that $v_t$ is a measurable upper semi-continuous function with $\| v_t \|_\infty < \infty$ for each $t$ by mathematical induction.
For $t = T$, $v_T = \bold{1}_A$, which is upper semi-continuous because $A$ is closed. Furthermore, it is measurable and has a finite $L^\infty$-norm. Suppose now that $v_{t+1}$ is a measurable upper semi-continuous function with $\| v_{t+1} \|_\infty < \infty$
Then, by Lemma \ref{sc_lem} $(i)$, $v_t = \bold{T}_t v_{t+1}$ is upper semi-continuous. Furthermore, it is clear that $v_t$ is measurable and $\| v_t \|_\infty \leq \| v_{t+1} \|_\infty < \infty$. This completes our inductive argument. 

We now use Lemma \ref{sc_lem} $(ii)$ to conclude that there exists a measurable function $\phi_t :\mathbb{R}^n \to \mathbb{R}^m$ such that 
$v_t(\bm{x}) = \sup_{\bm{\mu} \in \mathbb{D}_t} [ \bold{1}_A(\bm{x})
\int_{\mathbb{R}^n} v_{t+1} (\xi) Q_t(d\xi | \bm{x}, \phi_t(\bm{x}), \bm{\mu}) ]$ and $\phi_t(\bm{x}) \in \mathbb{U}(\bm{x})$ for all $(t, \bm{x}) \in \mathcal{T} \times \mathbb{R}^n$. Lastly, by the dynamic programming principle, we obtain  that $v_0(\bm{x}) = \inf_{\gamma \in \Gamma} \mathbb{E}^{\pi^\star, \gamma}  [
\prod_{t=0}^T \bold{1}_A(x_t) ]$ with $x_0 = \bm{x}$.
\end{proof}

This theorem greatly reduces the control strategy space we need to search for because it suffices to restrict our attention to non-randomized Markov policies. 
The Markov policy $\pi^\star$ maximizes the worst-case  probability of safety no matter how the disturbance (Player II) chooses its probability distribution $\mu_t$ in the ambiguity set $\mathbb{D}_t$  using the history of information. 
Furthermore, the dynamic programming principle allows us to obtain the following Bellman equation:

\begin{proposition}
Suppose that Assumption \ref{sc} holds. The value function $v_t$ solves the following Bellman equation:
for $t = T$
\begin{equation} \nonumber
v_T(\bm{x}) = \bold{1}_A(\bm{x}),
\end{equation}
and for $t \in \mathcal{T}$
\begin{equation}\label{dp}
\begin{split}
&v_t(\bm{x}) = \max_{\bm{u} \in \mathbb{U}(\bm{x})}
\inf_{\bm{\mu} \in \mathbb{D}_t}  \bold{1}_A(\bm{x}) \int_{\mathbb{R}^l} v_{t+1} (f(\bm{x}, \bm{u}, \bm{w})) d\bm{\mu} (\bm{w}).
\end{split}
\end{equation}
\end{proposition}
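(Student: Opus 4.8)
The plan is to reduce the Bellman equation to the two facts already in hand---that each application of $\bold{T}_t$ attains its outer supremum (Lemma~\ref{sc_lem}$(ii)$) and that the value functions are upper semi-continuous, bounded, and nonnegative (from the inductive argument in the proof of Theorem~\ref{exist}). The boundary case is immediate: by the definition of the value function given just before Theorem~\ref{exist} we have $v_T(\bm{x}) = \bold{1}_A(\bm{x})$, so nothing is to be shown at $t = T$. For $t \in \mathcal{T}$, I would start from the defining composition $v_t = \bold{T}_t \circ \bold{T}_{t+1} \circ \cdots \circ \bold{T}_{T-1}\bold{1}_A$, which factors as $v_t = \bold{T}_t v_{t+1}$ since $v_{t+1} = \bold{T}_{t+1} \circ \cdots \circ \bold{T}_{T-1} \bold{1}_A$. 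Unfolding the definition of $\bold{T}_t$ gives
\[
v_t(\bm{x}) = \sup_{\bm{u} \in \mathbb{U}(\bm{x})} \inf_{\bm{\mu} \in \mathbb{D}_t} \bold{1}_A(\bm{x}) \int_{\mathbb{R}^n} v_{t+1}(\xi)\, Q_t(d\xi \mid \bm{x}, \bm{u}, \bm{\mu}),
\]
so only two points remain: rewriting the integral against $Q_t$ as an integral against $\bm{\mu}$ over $\mathbb{R}^l$, and replacing the supremum by a maximum.

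For the integral identity, I would appeal to the way $Q_t$ was introduced. Because the dynamics are $x_{t+1} = f(x_t, u_t, w_t)$ with $w_t$ distributed according to $\mu_t$, the measure $Q_t(\cdot \mid \bm{x}, \bm{u}, \bm{\mu})$ is by construction the pushforward of $\bm{\mu}$ under $\bm{w} \mapsto f(\bm{x}, \bm{u}, \bm{w})$. Since $v_{t+1}$ is measurable and bounded and $f$ is measurable, the map $\bm{w} \mapsto v_{t+1}(f(\bm{x}, \bm{u}, \bm{w}))$ is measurable and the integral is finite, so the change-of-variables formula for pushforward measures yields
\[
\int_{\mathbb{R}^n} v_{t+1}(\xi)\, Q_t(d\xi \mid \bm{x}, \bm{u}, \bm{\mu}) = \int_{\mathbb{R}^l} v_{t+1}(f(\bm{x}, \bm{u}, \bm{w}))\, d\bm{\mu}(\bm{w}).
\]
To upgrade the supremum to a maximum, I would note that $v_{t+1}$ satisfies the hypotheses of Lemma~\ref{sc_lem}: it is upper semi-continuous, nonnegative (a composition of the operators starting from $\bold{1}_A \geq 0$), and of finite $L^\infty$-norm, all established in the proof of Theorem~\ref{exist}. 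Part $(ii)$ of that lemma then supplies a measurable selector $\phi_t(\bm{x}) \in \mathbb{U}(\bm{x})$ attaining the outer optimization pointwise, which justifies writing $\max_{\bm{u} \in \mathbb{U}(\bm{x})}$ in place of $\sup$.

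The only genuine subtlety I anticipate is the pushforward identity---specifically, making the informal description of $Q_t$ precise enough to invoke the change-of-variables theorem rigorously; everything else is bookkeeping inherited from Lemma~\ref{sc_lem} and the induction in Theorem~\ref{exist}. In particular, boundedness of $v_{t+1}$ rules out any integrability or interchange-of-limits concerns, so no further regularity argument is needed.
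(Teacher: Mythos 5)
Your proposal is correct and follows essentially the same route as the paper: since $v_t$ is defined as the operator composition $\bold{T}_t \circ \cdots \circ \bold{T}_{T-1}\bold{1}_A$, the recursion $v_t = \bold{T}_t v_{t+1}$ is definitional, the $Q_t$-integral is the pushforward integral against $\bm{\mu}$, and the ``max'' is justified exactly as the paper does it---by the attainment result of Theorem~\ref{exist} (i.e., Lemma~\ref{sc_lem}$(ii)$ applied to the upper semi-continuous, bounded, nonnegative $v_{t+1}$). Your added care about the pushforward change-of-variables step is the only part the paper leaves implicit, and it is handled correctly.
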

Note that ``$\max$'' is used instead of ``$\sup$'' in the outer problem because its optimal solution exists due to Theorem \ref{exist}.
This Bellman equation is computationally challenging because
it involves infinite-dimensional minimax optimization problems over the ambiguity set $\mathbb{D}_t$ of probability distributions.
In Section \ref{dual_Bellman}, we propose a duality-based approach to alleviate the computational issue that arises from the infinite-dimensional minimax programs.
Before introducing this method,
we discuss how to construct distributionally robust safe sets and controllers through a stochastic reachability analysis in the following subsection.

\subsection{Constructing Distributionally Robust Safe Sets and Safety-Oriented Controllers}\label{socon}

To obtain distributionally robust control policies and safe sets,
we evaluate the value function  $\{v_t\}_{t=0}^{T}$
by solving the Bellman equation backward in time, i.e., from $t = T$ to $t = 0$. 
A distributionally robust safe policy can then be constructed as
\begin{equation} \nonumber
\begin{split}
&\phi_t^{\tiny \mbox{safe}} (x_t) \in  \arg\max_{\bm{u} \in \mathbb{U}(x_t)} \inf_{\bm{\mu} \in \mathbb{D}_t}  \bold{1}_A(x_t) \int_{\mathbb{R}^l} v_{t+1} ( f(x_t, \bm{u}, \bm{w}) )  d\bm{\mu}(\bm{w}).
\end{split}
\end{equation}
The distributionally robust safe set with probability $\alpha$ can be computed as
\[
S_\alpha^\star(A) = \{\bm{x} \in \mathbb{R}^n \: | \:
v_0(\bm{x}) \geq \alpha \}
\]
because $v_0(\bm{x}) = \max_{\pi \in \Pi} \inf_{\gamma \in \Gamma} P_{\bm{x}}^{\mbox{\tiny safe}}(\pi, \gamma; A)$.  

Define the $t$-distributionally robust safe set with  probability $\alpha$  as
\begin{equation} \nonumber
\begin{split}
S_{\alpha, t}^\star (A) := \{
&\bm{x} \in \mathbb{R}^n \: | \: 
\forall \gamma \in \Gamma \;
\exists \pi \in\Pi \mbox{ s.t. } \mathbb{P}^{\pi, \gamma} (x_s \in A \; \forall s = t, \cdots, T, x_t = \bm{x} ) \geq \alpha
\}.
\end{split}
\end{equation}
If the system state lies in $S_{\alpha, t}^\star (A)$ at stage $t$, 
there exists a control policy such that the probability of the system being safe during the remaining stages is greater than equal to~$\alpha$.
Note that $S_{\alpha, 0}^\star(A) = S_{\alpha}^\star(A)$ under Assumption \ref{sc} which guarantees the existence of a distributionally robust safe policy.
Using the value function, we can  compute the set as
\[
S_{\alpha, t}^\star (A) = \{ \bm{x} \in \mathbb{R}^n \: | \: v_t (\bm{x}) \geq \alpha \}
\]
because  $v_t(\bm{x}) = \max_{\pi \in \Pi} \inf_{\gamma \in \Gamma} \mathbb{E}^{\pi, \gamma} [ \prod_{s = t}^T \bold{1}_A(x_s) ]$ with $x_t = \bm{x}$.  
Suppose now that we are given the support $W_t$ of $\mu_t$ for each $t$.
Consider the following controller:
given $x_t$ at stage $t$, the control action is determined as
\begin{equation} \label{soc}
u_t^{\tiny \mbox{safe}} \left \{ 
\begin{array}{ll}
 \in \mathbb{U}(x_t) 
& \mbox{ if }x_t \in \{ \bm{x} \: | \: f(\bm{x}, \bm{u}, \bm{w}) \in S_{\alpha, t+1}^\star(A) \; \forall \bm{u} \in \mathbb{U}(\bm{x})\; \forall \bm{w} \in W_t  \}\\
= \phi_t^{\tiny \mbox{safe}} (x_t)   & \mbox{ otherwise}.
\end{array}
\right.
\end{equation}
This controller chooses an arbitrary admissible control action if it can drive the system into $S^\star_{\alpha, t+1} (A)$ at stage $t+1$ for any realization of the disturbance. Otherwise, it uses a distributionally robust safe policy. 
This procedure is motivated by the deterministic safe controller synthesis method of
 Lygeros et al. \cite{Lygeros1999}. 
 We can show that  at each $t$ this controller ensures that 
 the system will remain safe for stages $t+1, \cdots, T$
 with probability greater than or equal to $\alpha$ 
under any disturbance distribution strategy $\gamma \in \Gamma$. 
 \begin{proposition}
 Suppose that Assumption \ref{sc} holds. 
 If the initial state is chosen so that $x_0 \in S_\alpha^\star (A)$ and 
 the Markov control policy \eqref{soc} is used,  then
 \[
x_t \in S_{\alpha,t}^\star(A) \quad \forall t \in \bar{\mathcal{T}},
\]
i.e., the probability for the system being safe for all remaining stages is greater than or equal to $\alpha$, regardless of how the disturbance distribution is chosen in the ambiguity set.
 \end{proposition}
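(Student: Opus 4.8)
The plan is to reduce everything to the level-set identity $S_{\alpha,t}^\star(A)=\{\bm x:v_t(\bm x)\ge\alpha\}$ recorded above and then to track the value function along the closed-loop trajectory. First I would record the elementary bound $v_t\le\mathbf 1_A$ pointwise (in \eqref{dp} the inner integral is at most $1$ because $v_{t+1}\le 1$), so that any state with $v_t(\bm x)\ge\alpha>0$ automatically lies in $A$; hence the asserted nestedness $x_t\in S_{\alpha,t}^\star(A)$ is exactly the invariant $v_t(x_t)\ge\alpha$. For the fixed non-randomized Markov policy $\pi^{\mbox{\tiny safe}}=(u_0^{\mbox{\tiny safe}},\dots,u_{T-1}^{\mbox{\tiny safe}})$ of \eqref{soc}, introduce the worst-case remaining safety probability
\[
J_t(\bm x):=\inf_{\gamma\in\Gamma}\mathbb{P}^{\pi^{\mbox{\tiny safe}},\gamma}\big\{x_s\in A\ \forall\,s=t,\dots,T\mid x_t=\bm x\big\},
\]
which by the dynamic programming principle for a fixed Markov policy satisfies $J_T=\mathbf 1_A$ and $J_t(\bm x)=\mathbf 1_A(\bm x)\inf_{\bm\mu\in\mathbb D_t}\int_{\mathbb R^l}J_{t+1}(f(\bm x,u_t^{\mbox{\tiny safe}}(\bm x),\bm w))\,d\bm\mu(\bm w)$. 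The Proposition is then exactly the claim that $J_0(\bm x)\ge\alpha$ for every $\bm x\in S_\alpha^\star(A)=\{v_0\ge\alpha\}$.

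I would prove this by backward induction, showing $J_t(\bm x)\ge\alpha$ for all $\bm x\in S_{\alpha,t}^\star(A)$. The base case $t=T$ is immediate since $S_{\alpha,T}^\star(A)=A$ and $J_T=\mathbf 1_A$. For the step fix $\bm x\in S_{\alpha,t}^\star(A)$, so $v_t(\bm x)\ge\alpha$ and $\bm x\in A$, and split on the two branches of \eqref{soc}. In the first branch every admissible control sends the state into $S_{\alpha,t+1}^\star(A)$ for every $\bm w\in W_t$; since each $\bm\mu\in\mathbb D_t$ is supported in $W_t$, the successor lies in $S_{\alpha,t+1}^\star(A)$ with $\bm\mu$-probability one, so the inductive hypothesis gives $J_{t+1}(f(\bm x,u_t^{\mbox{\tiny safe}}(\bm x),\cdot))\ge\alpha$ $\bm\mu$-a.e.\ and hence $J_t(\bm x)\ge\alpha$. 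In the second branch $u_t^{\mbox{\tiny safe}}(\bm x)=\phi_t^{\mbox{\tiny safe}}(\bm x)$ is the Bellman maximizer, so $\mathbf 1_A(\bm x)\inf_{\bm\mu}\int v_{t+1}(f(\bm x,\phi_t^{\mbox{\tiny safe}}(\bm x),\bm w))\,d\bm\mu=v_t(\bm x)\ge\alpha$, and it remains to transfer this lower bound from $v_{t+1}$ to $J_{t+1}$. A clean way to package both branches is as a submartingale statement: under $\pi^{\mbox{\tiny safe}}$ and every $\gamma\in\Gamma$, the process $M_t:=\big(\prod_{s=0}^{t-1}\mathbf 1_A(x_s)\big)v_t(x_t)$ should have nonnegative conditional drift, since then $P_{\bm x}^{\mbox{\tiny safe}}(\pi^{\mbox{\tiny safe}},\gamma;A)=\mathbb E^{\pi^{\mbox{\tiny safe}},\gamma}[M_T]\ge M_0=v_0(\bm x)\ge\alpha$ for all $\gamma$, i.e.\ $J_0(\bm x)\ge\alpha$.

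The main obstacle, and where I expect essentially all the difficulty to sit, is the interaction between the two branches. The drift of $M_t$ is nonnegative on the second-branch region because $\phi_t^{\mbox{\tiny safe}}$ attains the outer maximum in \eqref{dp}; on the first-branch region, however, \eqref{soc} applies an \emph{arbitrary} admissible control, which guarantees only the membership $v_{t+1}(x_{t+1})\ge\alpha$ and can leave $\mathbb E_{\bm\mu}[v_{t+1}(x_{t+1})]$ strictly below $v_t(\bm x)$, so the value-function drift may be negative there. Equivalently, the fixed-policy recursion for $J_t$ matches that for $v_t$ only when $J_{t+1}\ge v_{t+1}$ pointwise, but the first branch can force $J_{t+1}(x_{t+1})<v_{t+1}(x_{t+1})$, and this deficit then propagates backward through the optimal-control stages; thus neither $J_t\ge v_t$ nor the weaker $J_t\ge\alpha$ on $\{v_t\ge\alpha\}$ closes the induction by itself. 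The crux is therefore to control this degradation, e.g.\ by showing that first-branch states transit surely into $S_{\alpha,s}^\star(A)$ at \emph{every} later stage $s$, so that the accumulated shortfall never crosses the floor $\alpha$, or by reading \eqref{soc} as permitting in its first branch only those admissible actions that do not decrease $v_t$ (a set that contains $\phi_t^{\mbox{\tiny safe}}$), after which $M_t$ is a genuine submartingale and $J_0(\bm x)\ge v_0(\bm x)\ge\alpha$ follows at once.
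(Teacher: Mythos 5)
Your proposal is incomplete exactly where you say it is---the second-branch step, transferring the bound from $v_{t+1}$ to $J_{t+1}$, is never closed---so as a proof it does not go through. But the obstacle you isolated is genuine, and it is precisely the step that the paper's own proof elides. The paper argues by \emph{forward} induction on $t$: in the first branch of \eqref{soc} it argues, exactly as you do, that $x_{t+1}\in S_{\alpha,t+1}^\star(A)$ surely because every $\bm{\mu}\in\mathbb{D}_t$ is supported on $W_t$; in the second branch it asserts $x_{t+1}\in S_{\alpha,t+1}^\star(A)$ from the chain $\mathbb{P}^{\pi^{\mbox{\tiny safe}},\gamma}(x_s\in A\;\forall s\geq t+1)\geq\mathbb{P}^{\pi^{\mbox{\tiny safe}},\gamma}(x_s\in A\;\forall s\geq t)\geq\alpha$. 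That chain is a statement about trajectory probabilities computed from $x_t$ (and, moreover, under the pure argmax policy $(\phi_0^{\mbox{\tiny safe}},\dots,\phi_{T-1}^{\mbox{\tiny safe}})$ rather than the composite controller \eqref{soc} being analyzed); it does not yield the pointwise, almost-sure membership $v_{t+1}(x_{t+1})\geq\alpha$ that the induction hypothesis at stage $t+1$---and in particular the first-branch case at the next stage---actually requires. A bound on $\inf_{\bm{\mu}}\int v_{t+1}\,d\bm{\mu}$ says nothing about where $x_{t+1}$ lands; this is the same expectation-versus-pointwise problem you hit from the backward direction.

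Worse, the deficit propagation you describe can actually occur, so neither your backward induction nor the paper's forward induction can be repaired for \eqref{soc} as written. Sketch with $T=3$, $\alpha=1/2$: from $x_0\in A$ the only action leads, under the worst-case $\bm{\mu}_0\in\mathbb{D}_0$, to $\xi_g$ with probability $\alpha$ and to $\xi_b\notin A$ otherwise, with $v_1(\xi_g)=1$, so $v_0(x_0)=\alpha$ and $x_0$ is in the second branch. At $\xi_g$ there are two actions, one landing surely at a state with $v_2=1$, the other landing surely at a state $\eta\in A$ with $v_2(\eta)=\alpha$; both land in $S_{\alpha,2}^\star(A)$, so $\xi_g$ is in the first branch and \eqref{soc} may pick the second action. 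From $\eta$ the best action keeps the state in $A$ only with worst-case probability $\alpha$. The closed-loop worst-case safety probability from $x_0$ is then $\alpha\cdot\alpha=\alpha^2<\alpha$ even though $x_0\in S_\alpha^\star(A)$; and the literal conclusion $x_t\in S_{\alpha,t}^\star(A)$ for all $t$ fails with positive probability in any case, since $S_{\alpha,t}^\star(A)\subseteq A$ and trajectories exit $A$ with probability up to $1-\alpha$. So the statement holds only after a modification: either use the pure policy $\pi^\star$ of Theorem \ref{exist} (for which your submartingale $M_t$ closes the argument immediately), or adopt your proposed strengthening of the first branch---permit only actions $\bm{u}$ with $\inf_{\bm{\mu}\in\mathbb{D}_t}\int v_{t+1}(f(x_t,\bm{u},\bm{w}))\,d\bm{\mu}(\bm{w})\geq v_t(x_t)$, a set containing $\phi_t^{\mbox{\tiny safe}}(x_t)$---under which $M_t$ is a genuine submartingale and $J_0\geq v_0\geq\alpha$ follows for every $\gamma\in\Gamma$. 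In short: your diagnosis, not the paper's two-line induction, is the correct account of this proposition.
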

 
 \begin{proof}
 We use mathematical induction.
 For stage $t = 0$, the statement is true since $x_0$ is assumed to be contained in $S_\alpha^\star (A) = S_{\alpha, 0}^\star (A)$. 
Suppose that $x_t \in S_{\alpha, t}^\star (A)$ for some $t \in \mathcal{T}$.
 Fix an arbitrary $t \in \mathcal{T}$.
Assume first that $x_t \in \{ \bm{x} \: | \: f(\bm{x}, \bm{u}, \bm{w}) \in S_{\alpha, t+1}^\star(A)
\; \forall \bm{u} \in \mathbb{U}(\bm{x})\; \forall \bm{w} \in W_t  \}$. 
Fix an arbitrary $u_t \in \mathbb{U}(x_t)$.
The controller guarantees that
$x_{t+1} = f(x_t, u_t, w_t) \in S_{\alpha, t+1}^\star (A)$  with probability 1 for any $\mu_t \in \mathbb{D}_t$
because ${\mu}_t(W_t) = \int_{W_t} d{\mu}_t(\bm{w}) = 1$.
On the other hand, when  $x_t \notin \{ \bm{x} \: | \: f(\bm{x}, \bm{u}, \bm{w}) \in S_{\alpha, t+1}^\star(A)
\; \forall \bm{u} \in \mathbb{U}(\bm{x})\; \forall \bm{w} \in W_t  \}$,
by using a distributionally robust safe policy $\phi_t^{\mbox{\tiny safe}}$, we can ensure that $x_{t+1} \in S_{\alpha, t+1}^\star (A)$ since
\begin{equation}\nonumber
\begin{split}
\mathbb{P}^{\pi^{\mbox{\tiny safe}}, \gamma}(x_s \in A \; \forall s \in \mathcal{T}_{t+1})
&\geq \mathbb{P}^{\pi^{\mbox{\tiny safe}}, \gamma}(x_s \in A \; \forall s \in \mathcal{T}_{t})\\
& \geq \alpha \quad \forall \gamma \in \Gamma,
\end{split}
\end{equation}
where $\pi^{\mbox{\tiny safe}} := \{  \phi_0^{\tiny \mbox{safe}}, \cdots, \phi_{T-1}^{\tiny \mbox{safe}}  \}$ and 
$\mathcal{T}_{t} := \{t, t+1, \cdots, T\}$.
 This completes our inductive argument.
 \end{proof}

 If another objective function needs to be minimized in a distributionally robust way while ensuring safety, one may solve
\[
\inf_{\pi \in \Pi} \sup_{\gamma \in \Gamma} \mathbb{E}^{\pi, \gamma} \left [ \sum_{t=0}^{T-1} r(x_t, u_t) + q(x_T) \right ]
\]
to obtain an optimal distributionally robust policy $\pi^{opt}$, where $r :\mathbb{R}^n \times \mathbb{R}^m \to \mathbb{R}$ is a running cost function and $q:\mathbb{R}^n \to \mathbb{R}$ is a terminal cost function of interest. 
Then, one can employ the proposed controller \eqref{soc} that chooses $\pi_t^{opt} (x_t)$ whenever
$x_t \in \{ \bm{x} \: | \: f(\bm{x}, \bm{u}, \bm{w}) \in S_{\alpha, t+1}^\star(A)\;\forall \bm{u} \in \mathbb{U}(\bm{x})\; \forall \bm{w} \in W_t\}$.
Note that this controller prioritizes safety and tries to minimize the worst-case cost value whenever there is the flexibility to do so. 
This \emph{safety-oriented distributionally robust control design} approach can be overly conservative, particularly when $W_t$ is large.
However, it is  computationally efficient because the cost-minimizing control problem is decoupled from the safe control problem.

\section{Moment Uncertainty and Dual Bellman Equations}\label{dual_Bellman}

\subsection{Ambiguity Sets with Moment Uncertainty}\label{amb_moment}

Recall that the proposed distributionally robust safe policies maximize the worst-case probability for a system to be safe, assuming that the probability distribution of the disturbance lies within an ambiguity set, $\mathbb{D}_t$, of distributions.
Therefore, modeling the ambiguity set may critically affect the resulting safe policies. 
Several ambiguity set modeling approaches have been developed in the context of single-stage optimization problems.
The approaches can be categorized as \emph{moment-based} and \emph{statistical distance-based} methods.
A moment-based approach employs an ambiguity set of distributions whose moments (e.g., mean and covariance) satisfy certain constraints \cite{Scarf1958}, \cite{Delage2010}, \cite{Popescu2007}, \cite{Zymler2013}, \cite{Wiesemann2014}.
A statistical distance-based approach takes into account an ambiguity set of probability distributions that are closed to a nominal distribution in terms of a chosen statistical distance, such as $\phi$-divergence \cite{Bayraksan2015}, \cite{BenTal2013}, \cite{Jiang2016}, \cite{Sun2016}, Prokhorov metric \cite{Erdogan2006} and Wasserstein distance \cite{Esfahani2015}, \cite{Zhao2015}, \cite{Gao2016}.

In this work, we take a moment-based approach.
Suppose that an estimate of the mean and covariance matrix of the disturbance $w_t$ is the only available information.
Let $\bold{m}_t \in \mathbb{R}^l$ and $\bold{\Sigma}_t \in \mathbb{R}^{l \times l}$ be the estimate of the mean and covariance matrix, respectively.
The set of all the probability distributions (i.e., distribution measures) that are consistent with these estimates can be modeled as
\begin{equation} \label{a_set}
\begin{split}
\mathbb{D}_t := \{ &\mu_t \in M_+(W_t) \: | \: \mu_t(W_t) = 1, \\
&|\mathbb{E}_{\mu_t} [ w_t] - \bold{m}_t |  \leq b_t, \\
&\mathbb{E}_{\mu_t} [ (w_t - \bold{m}_t)(w_t - \bold{m}_t)^\top ] \preceq c_t \bold{\Sigma}_t\},
\end{split}
\end{equation}
where $\mathbb{E}_{\mu_t}$ denotes the expectation taken with respect to the probability distribution $\mu_t$. 
Here, $b_t \in \mathbb{R}_+^l$ and $c_t \geq 1$ are given constants that depend on one's confidence in the estimates $\bold{m}_t$ and $\bold{\Sigma}_t$.
Any probability distribution in this set satisfies the following properties: $(i)$ the support of $w_t$ is $W_t$; $(ii)$ the mean of $w_{t, i}$ lies in a circle of size $b_{t,i}$; and $(iii)$ the centered second moment matrix of $w_t$ lies in a positive semidefinite cone. It models how likely $w_t$ is to be close to $\bold{m}_t$ in terms of the correlation matrix $c_t \bold{\Sigma}_t$ with $c_t \geq 1$.

\subsection{Zero Duality Gap}

In general, solving the Bellman equation~\eqref{dp} to evaluate $v_t (\bm{x})$ with the ambiguity set \eqref{a_set}
 is challenging as it involves infinite-dimensional minimax optimization problems.
To resolve this difficulty, we propose a dual formulation method.\footnote{The proposed method does not resolve the scalability issue inherent in dynamic programming; the complexity of computing $v_t(\bm{x})$ is still exponential with the dimension of system state $\bm{x}$ even if the proposed approach is employed.}
Fix an arbitrary $(t, \bm{x}, \bm{u}) \in \mathcal{T} \times \mathbb{R}^n \times \mathbb{R}^m$ such that $\bm{u} \in \mathbb{U}(\bm{x})$.
With the ambiguity set \eqref{a_set},
the inner minimization problem in
the Bellman equation \eqref{dp} can be written as the following infinite-dimensional conic linear program:
\begin{subequations} \label{primal}
\begin{align}
\mathbf{P}: \inf_{\bm{\mu}} \;\; & \int_{W_t} v_{t+1} (f(\bm{x}, \bm{u}, \bm{w})) d\bm{\mu}(\bm{w}) \\
\mbox{s.t.} \;\; &  \bold{m}_t - b_t \leq \int_{W_t} \bm{w} d\bm{\mu}(\bm{w}) \leq \bold{m}_t + b_t  \label{con1} \\
& \int _{W_t} (\bm{w} - \bold{m}_t) (\bm{w} - \bold{m}_t)^\top d\bm{\mu} (\bm{w}) \preceq c_t \bold{\Sigma}_t \label{con2} \\
& \int_{W_t} d \bm{\mu}(\bm{w}) = 1 \label{con3} \\
&\bm{\mu} \in M_+(W_t). \label{con4}
\end{align}
\end{subequations}
Here, \eqref{con1} and \eqref{con2} represent the first- and second-moment constraints encoded in the ambiguity set $\mathbb{D}_t$, respectively.
The constraints \eqref{con3} and \eqref{con4} ensure that $\bm{\mu}$ is a probability distribution measure whose support is $\mathbb{D}_t$.
Let $\underline{\bm{\lambda}}, \overline{\bm{\lambda}} \in \mathbb{R}^l$ be the Lagrange multipliers associated with the inequality constraints \eqref{con1}.
We also let $\bm{\Lambda} \in \mathbb{S}^l$ and $\bm{\nu} \in \mathbb{R}$ be the Lagrange multipliers associated with \eqref{con2} and \eqref{con3}, respectively.
Its dual problem can then be derived as
\begin{equation} \label{dual}
\begin{split}
\mathbf{P}^*: \sup_{\underline{\bm{\lambda}}, \overline{\bm{\lambda}}, \bm{\Lambda}, \bm{\nu}} \;  &-\underline{b}_t^\top \underline{\bm{\lambda}}  - \overline{b}_t^\top \overline{\bm{\lambda}} - c_t \mbox{Tr}(\bold{\Sigma}_t \bm{\Lambda}) - \bm{\nu}\\
\mbox{s.t.} \;& \bm{w}^\top (\overline{\bm{\lambda}} - \underline{\bm{\lambda}}) + (\bm{w} - \bold{m}_t)^\top \bm{\Lambda} (\bm{w} - \bold{m}_t) \\
& + \bm{\nu} + v_{t+1}(f(\bm{x}, \bm{u}, \bm{w})) \geq 0 \; \forall \bm{w} \in W_t\\
& \underline{\bm{\lambda}}, \overline{\bm{\lambda}} \geq 0, \; \bm{\Lambda} \succeq 0\\
& \underline{\bm{\lambda}}, \overline{\bm{\lambda}} \in \mathbb{R}^l, \; \bm{\Lambda}  \in \mathbb{S}^l, \; \bm{\nu} \in \mathbb{R},
\end{split}
\end{equation}
where
$\underline{b}_t := b_t - \bold{m}_t$ and $\overline{b}_t := b_t + \bold{m}_t$.
This is a semi-infinite program because the first constraint must be satisfied for all $\bm{w} \in W_t \subseteq \mathbb{R}^l$.
Let $\inf \mathbf{P}$ and $\sup \mathbf{P}^*$ denote the optimal values of the primal and dual problems, respectively. 
By weak duality, we have
$\sup \mathbf{P}^* \leq \inf \mathbf{P}$.
However, we can further show that strong duality holds, i.e., the dual program is exact in the sense that the duality gap is zero.

\begin{proposition}[Zero duality gap]
Suppose that Assumption \ref{sc} holds, $\mathbf{P}$ has a feasible solution, and $W_t$ is compact.
Then,  $\mathbf{P}$ has an optimal solution and there is no duality gap, i.e.,
\[
\sup \mathbf{P}^* = \min \mathbf{P}.
\]
\end{proposition}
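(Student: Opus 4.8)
The plan is to treat $\mathbf P$ as an infinite-dimensional conic linear program over the cone $M_+(W_t)$ --- a generalized moment problem --- and to obtain strong duality from a Slater condition on the dual $\mathbf P^*$. Since $W_t$ is compact, $M(W_t)$ is the dual of $C(W_t)$ by the Riesz representation theorem, so the feasible set of $\mathbf P$ sits inside the set of probability measures on $W_t$, which is weak-$*$ compact by Banach--Alaoglu. The only obstruction to applying the classical moment-duality machinery directly is that the objective integrand $\bm w \mapsto v_{t+1}(f(\bm x, \bm u, \bm w))$ is, by Theorem \ref{exist}, merely upper semi-continuous and nonnegative rather than continuous, hence does not lie in the pre-dual $C(W_t)$. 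I would remove this obstruction exactly as in the proof of Lemma \ref{sc_lem}: choose bounded continuous functions $g_k \downarrow v_{t+1}\circ f$ pointwise, and let $\mathbf P_k$ and $\mathbf P_k^*$ denote the primal and dual problems \eqref{primal}--\eqref{dual} with $v_{t+1}\circ f$ replaced by $g_k$.

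For each continuous approximant the situation is standard. First I would exhibit a Slater point of the dual that is valid uniformly in $k$: taking $\bm\Lambda = \epsilon I \succ 0$, $\underline{\bm\lambda} = \overline{\bm\lambda} = \epsilon \mathbf 1 > 0$ and $\bm\nu = 1$, the semi-infinite constraint reduces to $\epsilon |\bm w - \mathbf m_t|^2 + 1 + g_k(\bm w) \ge 1 > 0$ for all $\bm w \in W_t$, where $g_k \ge v_{t+1}\circ f \ge 0$ is used crucially. Because $g_k$ is continuous and $W_t$ is compact, classical strong duality for the moment problem then gives $\min \mathbf P_k = \sup \mathbf P_k^*$ with the primal attained at some probability measure $\mu_k$; moreover the uniform strict-feasibility margin of the Slater point above translates, via the standard interior-point estimate, into a bound on the dual optimizers $y_k := (\underline{\bm\lambda}_k, \overline{\bm\lambda}_k, \bm\Lambda_k, \bm\nu_k)$ that is independent of $k$.

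It then remains to pass to the limit $k \to \infty$. For primal attainment I would extract a weak-$*$ convergent subsequence $\mu_k \to \mu^\star$; feasibility of $\mu^\star$ follows because the first- and second-moment functionals are weak-$*$ continuous on the compact support $W_t$, and optimality follows from the chain $\inf \mathbf P = \lim_k \min \mathbf P_k \le \lim_k \int g_j \, d\mu_k = \int g_j \, d\mu^\star$ for each fixed $j$ (using $g_k \le g_j$ for $k \ge j$), followed by $\int g_j \, d\mu^\star \downarrow \int v_{t+1}\circ f \, d\mu^\star$. For the absence of a duality gap I would use the uniform bound from the previous step to extract a limit $y^\star$ of the $y_k$, pass the semi-infinite constraint to the limit pointwise in $\bm w$ (its left-hand side is continuous in the multipliers and $g_k(\bm w) \to v_{t+1}(f(\bm x,\bm u,\bm w))$), conclude that $y^\star$ is feasible for $\mathbf P^*$, and note that its objective value equals $\lim_k \sup \mathbf P_k^* = \inf \mathbf P$; with weak duality $\sup \mathbf P^* \le \inf \mathbf P$ this yields $\sup \mathbf P^* = \min \mathbf P$. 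The main obstacle I anticipate is precisely the uniform boundedness of the dual optimizers $y_k$, since without it no limiting dual-feasible point is available; this is exactly where compactness of $W_t$ together with the uniform Slater condition --- which in turn relies on the nonnegativity $v_{t+1} \ge 0$ --- has to do the real work, and where one must be careful that the bound does not degrade as $g_k \downarrow v_{t+1}\circ f$.
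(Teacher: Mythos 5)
Your strategy is genuinely different from the paper's, so it is worth comparing before pointing out the problem. The paper never regularizes the integrand: it homogenizes the dual feasible set into a convex cone $P(W_t)$, exhibits an interior (Slater-type) point of that cone using only the bounds $0 \le v_{t+1} \le 1$ and compactness of $W_t$, and then invokes conic duality for the generalized moment problem (Theorem 1.2 of \cite{Lasserre2009}; see also \cite{Shapiro2001}), which yields zero gap \emph{and} attainment of $\min \mathbf{P}$ in one step, with no approximation and no limit passage. Your primal half is essentially sound: the feasible sets of $\mathbf{P}_k$ and $\mathbf{P}$ coincide, $\min \mathbf{P}_k \downarrow \inf \mathbf{P}$ by monotone convergence, and your chain establishing feasibility and optimality of the weak-$*$ limit $\mu^\star$ is correct.

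The gap is in the dual half, precisely where you flag that ``the real work'' happens, and it is fatal as stated. A Slater point of a convex program bounds the optimal solutions of its \emph{dual}, not its own optimal solutions; your strictly feasible point of $\mathbf{P}_k^*$ therefore gives zero gap and primal attainment for $\mathbf{P}_k$ (which you already have from weak-$*$ compactness), but no control whatsoever over the multipliers $y_k$. A uniform bound on the $y_k$ would require a Slater condition for the \emph{primal} measure problem --- a measure satisfying \eqref{con1}--\eqref{con2} strictly --- which the proposition does not assume and which can fail outright (take $b_t = 0$, so the mean constraint is an equality). Worse, the bound you want is false in general: if your limit argument worked, it would produce a $y^\star$ feasible for $\mathbf{P}^*$ with value $\lim_k \sup \mathbf{P}_k^* = \inf \mathbf{P}$, i.e.\ \emph{dual attainment}, a strictly stronger conclusion than $\sup \mathbf{P}^* = \min \mathbf{P}$, and one that fails for degenerate moment constraints. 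For instance, on $W_t = [-1,1]$, minimizing $\int (1+\bm{w})\, d\bm{\mu}$ subject to $\int \bm{w}^2 d\bm{\mu} \le 0$ and $\int d\bm{\mu} = 1$ has primal value $1$ attained at $\delta_0$, the dual value also equals $1$, but every dual sequence approaching it has $\bm{\Lambda}_k \to \infty$; no interior-point estimate can produce a uniform bound there. (A secondary issue: continuous $g_k \downarrow v_{t+1}\circ f$ exist only if $\bm{w} \mapsto v_{t+1}(f(\bm{x},\bm{u},\bm{w}))$ is upper semi-continuous, which needs $f$ continuous in $\bm{w}$; Theorem~\ref{exist} gives semi-continuity of $v_{t+1}$ in the state only, while the paper assumes just measurability of $f$.) The repair is to abandon the limit passage on the dual side altogether and argue as the paper does: the Slater point you constructed, verified directly for the original bounded integrand $v_{t+1}\circ f$ (no continuity is needed for that verification), is exactly the hypothesis under which infinite-dimensional conic duality gives $\sup \mathbf{P}^* = \min \mathbf{P}$ without requiring the supremum to be attained.
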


\begin{proof}
Note that $\mathbf{P}$ has a feasible solution with finite value since the objective function value lies in $[0,1]$.
We introduce the following convex cone:
\begin{equation}\nonumber
\begin{split}
P(W_t) := \big \{
&(\underline{\bm{\lambda}}, \overline{\bm{\lambda}}, \Lambda, \bm{\nu}, {\lambda}_0 )\in \mathbb{R}^l \times \mathbb{R}^l \times \mathbb{S}^l \times \mathbb{R} \times \mathbb{R} : \underline{\bm{\lambda}}, \overline{\bm{\lambda}} \geq 0, \bm{\Lambda} \succeq 0; \\
& \bm{w}^\top (\overline{\bm{\lambda}} - \underline{\bm{\lambda}})
+(\bm{w} - \bold{m}_t)^\top \bm{\Lambda} (\bm{w} - \bold{m}_t)
+\bm{\nu} + \lambda_0 v_{t+1}(f(\bm{x}, \bm{u}, \bm{w})) \geq 0 \; \forall \bm{w} \in W_t
\big \}.
\end{split}
\end{equation}
Fix an arbitrary $\epsilon > 0$. Choose $(\underline{\bm{\lambda}}^{\mbox{\tiny feas}}, \overline{\bm{\lambda}}^{\mbox{\tiny feas}}, \Lambda^{\mbox{\tiny feas}}, \bm{\nu}^{\mbox{\tiny feas}}, 1)$ $\in P(W_t)$ such that 
$\underline{\bm{\lambda}}^{\mbox{\tiny feas}} = \overline{\bm{\lambda}}^{\mbox{\tiny feas}} = 0$, $\bm{\Lambda}^{\mbox{\tiny feas}} = \epsilon I$ and 
\[
\bm{\nu}^{\mbox{\tiny feas}} =\rho - \inf_{\bm{w} \in W_t, \delta \in [-\epsilon, \epsilon]} (1 + \delta) v_{t+1}(f(\bm{x}, \bm{u}, \bm{w})),
\]
where
\[
\rho := \epsilon - \inf_{\bm{w} \in W_t, \delta' \in [-2\epsilon, 2\epsilon]^l}  \bm{w}^\top \delta'.
\]
Note that $\rho \in \mathbb{R}$ because $W_t$ is compact, and that $\bm{\nu}^{\mbox{\tiny feas}}  \in \mathbb{R}$ because the value of $v_{t+1}$ is in $[0, 1]$.
Therefore, any $(\underline{\bm{\lambda}}, \overline{\bm{\lambda}}, \Lambda, \bm{\nu}, {\lambda}_0 )$ that is contained in the $\epsilon$-ball centered at $(\underline{\bm{\lambda}}^{\mbox{\tiny feas}}, \overline{\bm{\lambda}}^{\mbox{\tiny feas}}, \Lambda^{\mbox{\tiny feas}}, \bm{\nu}^{\mbox{\tiny feas}}, 1 )$ lies in the cone $P(W_t)$.
This implies that $(\underline{\bm{\lambda}}^{\mbox{\tiny feas}}, \overline{\bm{\lambda}}^{\mbox{\tiny feas}}, \Lambda^{\mbox{\tiny feas}}, \bm{\nu}^{\mbox{\tiny feas}}, 1 )$ is an interior point of  $P(W_t)$.
Thus, due to Theorem 1.2 in \cite{Lasserre2009}, there is no duality gap and the inf in $\mathbf{P}$ is attained.

\end{proof}
This proof ensures that a Slater type condition holds, and the rest follows from results of conic duality in infinite-dimensional convex optimization (see also \cite{Shapiro2001}).

\subsection{Dual Bellman Equation as a Semi-Infinite Program}

Using the zero duality gap result, we can substitute the inner minimization problem ($\inf \mathbf{P}$) in the Bellman equation as its dual problem ($\sup \mathbf{P}^*$) without sacrificing optimality.
This substitution allows us to evaluate the value function $v_t(\bm{x})$ via a dual version of the Bellman equation.
\begin{theorem}[Dual Bellman equation] \label{reform}
Suppose that Assumption \ref{sc} holds, $\mathbf{P}$ has a feasible solution, and $W_t$ is compact.
For all $(t, \bm{x}) \in \mathcal{T} \times  \mathbb{R}^n$, the Bellman equation \eqref{dp} is equivalent to the following semi-infinite program:
\begin{equation} \nonumber
\begin{split}
v_t(\bm{x}) =\: \bold{1}_A(\bm{x}) \times
\sup_{\bm{u}, \underline{\bm{\lambda}}, \overline{\bm{\lambda}}, \bm{\Lambda}, \bm{\nu}} \; & -\underline{b}_t^\top \underline{\bm{\lambda}}- \overline{b}_t^\top \overline{\bm{\lambda}}  - c_t  \mbox{\emph{Tr}}(\bold{\Sigma}_t \bm{\Lambda}) - \bm{\nu}\\
\mbox{s.t.} \; &  \bm{w}^\top (\overline{\bm{\lambda}} - \underline{\bm{\lambda}}) + (\bm{w} - \bold{m}_t)^\top \bm{\Lambda} (\bm{w} - \bold{m}_t) + \bm{\nu} \\
& + v_{t+1}(f(\bm{x}, \bm{u}, \bm{w})) \geq 0 \quad \forall \bm{w} \in W_t\\
& \underline{\bm{\lambda}}, \overline{\bm{\lambda}} \geq 0, \; \bm{\Lambda} \succeq 0\\
& \bm{u} \in \mathbb{U}(\bm{x}), \; \underline{\bm{\lambda}}, \overline{\bm{\lambda}} \in \mathbb{R}^l, \; \bm{\Lambda}  \in \mathbb{S}^l, \; \bm{\nu} \in \mathbb{R}
\end{split}
\end{equation}
with the terminal condition $v_T(\bm{x}) = \bold{1}_A(\bm{x})$.
\end{theorem}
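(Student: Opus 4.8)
The plan is to assemble the asserted semi-infinite program from three ingredients that are already in place: the structure of the Bellman equation \eqref{dp}, the exactness of the conic dual established in the preceding Zero Duality Gap proposition, and the elementary fact that an iterated supremum collapses to a single joint supremum. No new analytic machinery is needed; the argument is essentially bookkeeping, since the genuine analytic content resides in the duality proposition.

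First I would dispose of the vacuous case $\bm{x} \notin A$. Here $\bold{1}_A(\bm{x}) = 0$, so the right-hand side of \eqref{dp} vanishes for every $\bm{u}$ and $\bm{\mu}$, giving $v_t(\bm{x}) = 0$. On the program side the factor $\bold{1}_A(\bm{x})$ multiplies the dual supremum, so I must check that this supremum is finite to avoid an ill-defined $0 \cdot \infty$. This follows from weak duality: $\sup \mathbf{P}^* \leq \inf \mathbf{P} \leq 1$, because the primal objective $\int_{W_t} v_{t+1}(f(\bm{x},\bm{u},\bm{w}))\,d\bm{\mu}(\bm{w})$ lies in $[0,1]$ (recall $v_{t+1} \in [0,1]$ and $\bm{\mu}$ is a probability measure). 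Hence both sides equal $0$.

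For $\bm{x} \in A$ we have $\bold{1}_A(\bm{x}) = 1$, and \eqref{dp} reduces to $v_t(\bm{x}) = \max_{\bm{u} \in \mathbb{U}(\bm{x})} \inf_{\bm{\mu} \in \mathbb{D}_t} \int_{W_t} v_{t+1}(f(\bm{x},\bm{u},\bm{w}))\,d\bm{\mu}(\bm{w})$. Fixing an arbitrary admissible $\bm{u} \in \mathbb{U}(\bm{x})$, the inner infimum is exactly the primal program $\mathbf{P}$ of \eqref{primal} for this $(t,\bm{x},\bm{u})$. Its feasible set is the ambiguity set $\mathbb{D}_t$, which is nonempty and does not depend on $\bm{u}$, so $\mathbf{P}$ is feasible uniformly in $\bm{u}$; together with compactness of $W_t$ this is precisely the hypothesis of the Zero Duality Gap proposition. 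Applying that result pointwise yields $\inf \mathbf{P} = \min \mathbf{P} = \sup \mathbf{P}^*$ for each $\bm{u}$, where $\mathbf{P}^*$ is the dual \eqref{dual}. Substituting the dual optimal value for the inner infimum converts the recursion into $v_t(\bm{x}) = \max_{\bm{u} \in \mathbb{U}(\bm{x})}\, \sup_{\underline{\bm{\lambda}},\overline{\bm{\lambda}},\bm{\Lambda},\bm{\nu}} \big( -\underline{b}_t^\top \underline{\bm{\lambda}} - \overline{b}_t^\top \overline{\bm{\lambda}} - c_t \mbox{Tr}(\bold{\Sigma}_t \bm{\Lambda}) - \bm{\nu} \big)$ subject to the dual feasibility constraints of \eqref{dual}.

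The last step merges the two optimizations. Since $\max_{\bm{u}} \sup_{(\underline{\bm{\lambda}},\overline{\bm{\lambda}},\bm{\Lambda},\bm{\nu})}$ of one objective equals the supremum taken jointly over $(\bm{u}, \underline{\bm{\lambda}},\overline{\bm{\lambda}},\bm{\Lambda},\bm{\nu})$ with $\bm{u} \in \mathbb{U}(\bm{x})$ appended to the constraint list, I obtain the displayed semi-infinite program; the dependence on $\bm{u}$ enters only through the semi-infinite constraint carrying $v_{t+1}(f(\bm{x},\bm{u},\bm{w}))$. Reinstating the factor $\bold{1}_A(\bm{x})$, which equals $1$ in this case and $0$ in the previous one, unifies both cases into the single stated equation. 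I would also remark that the merged outer supremum over $\bm{u}$ is in fact attained, by Theorem \ref{exist}, so writing ``$\sup$'' for the joint problem is consistent with the ``$\max$'' in \eqref{dp}. The only points requiring care, both already settled above, are the uniform feasibility of $\mathbf{P}$ over $\bm{u}$ and the finiteness of the dual value in the vacuous case, so I do not expect any essential obstacle here.
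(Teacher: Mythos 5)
Your proposal is correct and follows essentially the same route the paper takes: the theorem is an immediate consequence of the Zero Duality Gap proposition, applied pointwise for each fixed $\bm{u} \in \mathbb{U}(\bm{x})$ to replace the inner infimum $\inf \mathbf{P}$ by $\sup \mathbf{P}^*$, after which the outer maximization over $\bm{u}$ merges with the dual supremum into a single joint semi-infinite program. Your additional care with the $\bm{x} \notin A$ case (ruling out $0 \cdot \infty$ via weak duality) and with the uniform-in-$\bm{u}$ feasibility of $\mathbf{P}$ only makes explicit what the paper leaves implicit.
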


\begin{remark}
For a compact representation, we merged \emph{``$\max_{\bm{u}}$''} and \emph{``$\sup_{\underline{\bm{\lambda}},\overline{\bm{\lambda}}, \bm{\Lambda}, \bm{\nu}}$''}. However, it should be noted that this semi-infinite program has an optimal feasible $\bm{u}$.
\end{remark}

Since the dual problem $(\sup \mathbf{P}^*)$ is a  semi-infinite program, the dual Bellman equation also involves semi-infinite optimization problems.
Each semi-infinite optimization program can be solved by using several convergent methods, such as discretization methods, exchange methods, homotopy methods, and primal-dual methods (e.g., \cite{Hettich1993}, \cite{Reemtsen1998}, \cite{Lopez2007} and the reference therein).
In Section \ref{ex}, we employ the discretization method proposed by Reemtsen~\cite{Reemtsen1991}.
This algorithm adaptively generates a grid on $W_t$ and converges to  a locally optimal value of the semi-infinite program in the dual Bellman equation (see \cite{Reemtsen1991} for a proof).
When the semi-infinite program is concave, it converges to the globally optimal value.
We can show that each semi-infinite program is  concave under the next assumption.

\begin{assumption} \label{struct}
The following  properties  hold:
\begin{enumerate}[$(i)$]
\item $f: \mathbb{R}^n \times \mathbb{R}^m \times \mathbb{R}^l \to \mathbb{R}^n$ is an affine function;

\item  $A$ is a convex set;

\item For all $\bm{x}_1, \bm{x}_2 \in \mathbb{R}^n$ and for all $\lambda \in (0,1)$, 
if $\bm{u}_i \in \mathbb{U}(\bm{x}_i)$, $i=1,2$, then
$\lambda \bm{u}_1 + (1-\lambda) \bm{u}_2 \in \mathbb{U}(\lambda \bm{x}_1 + (1-\lambda) \bm{x}_2)$.
\end{enumerate}

\end{assumption}

When $\mathbb{U}$ is independent of $\bm{x}$,  Assumption \ref{struct} $(iii)$ is equivalent to the concavity of $\mathbb{U}$.
Note that this assumption is independent of the ambiguity set $\mathbb{D}_t$.  Thus, the following concavity result holds with general ambiguity sets:

\begin{proposition}\label{concave}
Suppose  that Assumptions \ref{sc} and \ref{struct} hold.
Then, the value function $v_t (\bm{x})$ is concave with respect to $\bm{x} \in A$ for each $t \in \mathcal{T}$.
\end{proposition}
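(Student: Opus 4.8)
The plan is to argue by backward induction on $t$, proving that the restriction of $v_t$ to $A$ is concave given that the restriction of $v_{t+1}$ to $A$ is concave. The base case $t=T$ is immediate, since $v_T = \bold{1}_A$ equals the constant $1$ on $A$. For the inductive step I would introduce $G_t(\bm{x},\bm{u}) := \inf_{\bm{\mu}\in\mathbb{D}_t}\int_{W_t} v_{t+1}(f(\bm{x},\bm{u},\bm{w}))\,d\bm{\mu}(\bm{w})$, so that the Bellman equation \eqref{dp} reads $v_t(\bm{x}) = \bold{1}_A(\bm{x})\sup_{\bm{u}\in\mathbb{U}(\bm{x})} G_t(\bm{x},\bm{u})$. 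For $\bm{x}\in A$ the indicator factor is $1$, so the claim reduces to showing that $\bm{x}\mapsto\sup_{\bm{u}\in\mathbb{U}(\bm{x})} G_t(\bm{x},\bm{u})$ is concave on $A$.

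This I would establish through three concavity-preservation steps. First, because $f$ is affine (Assumption \ref{struct}$(i)$), for each fixed $\bm{w}$ the map $(\bm{x},\bm{u})\mapsto f(\bm{x},\bm{u},\bm{w})$ is affine, so composing it with a concave $v_{t+1}$ would give an integrand that is jointly concave in $(\bm{x},\bm{u})$. Second, for each fixed $\bm{\mu}$ the operator $\int(\cdot)\,d\bm{\mu}$ is linear and positivity-preserving, hence concavity-preserving, and an infimum of jointly concave functions over $\bm{\mu}\in\mathbb{D}_t$ is again jointly concave; this would make $G_t$ jointly concave in $(\bm{x},\bm{u})$. Third, Assumption \ref{struct}$(iii)$ says exactly that the graph $\{(\bm{x},\bm{u}):\bm{u}\in\mathbb{U}(\bm{x})\}$ is convex, so by the standard partial-maximization rule the marginal $\bm{x}\mapsto\sup_{\bm{u}\in\mathbb{U}(\bm{x})}G_t(\bm{x},\bm{u})$ inherits concavity from the joint concavity of $G_t$; here Assumption \ref{struct}$(ii)$ guarantees the combined point $\bm{x}_\lambda=\lambda\bm{x}_1+(1-\lambda)\bm{x}_2$ stays in $A$, and Theorem \ref{exist} supplies optimal controls $\bm{u}_1,\bm{u}_2$ whose convex combination is admissible at $\bm{x}_\lambda$.

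The hard part will be the first step, and it is genuinely subtle rather than routine. The Bellman recursion forces $v_{t+1}$ to vanish identically off $A$, since the factor $\bold{1}_A$ annihilates it there, so $v_{t+1}$ suffers a downward jump across $\partial A$ and is concave only on $A$, not on all of $\mathbb{R}^n$. But the integrand evaluates $v_{t+1}$ at $f(\bm{x},\bm{u},\bm{w})$, which may leave $A$ for some disturbances $\bm{w}$, so the composition ``concave $\circ$ affine'' cannot be invoked verbatim: indeed the naive pointwise inequality $v_{t+1}(\lambda\bm{y}_1+(1-\lambda)\bm{y}_2)\geq\lambda v_{t+1}(\bm{y}_1)+(1-\lambda)v_{t+1}(\bm{y}_2)$ can fail when one of $\bm{y}_1,\bm{y}_2$ lies outside $A$ while the other carries positive value. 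The crux is therefore to recover joint concavity of $(\bm{x},\bm{u})\mapsto\int_{W_t} v_{t+1}(f(\bm{x},\bm{u},\bm{w}))\,d\bm{\mu}(\bm{w})$ at the level of the \emph{averaged} value rather than pointwise, by exploiting how the disturbance distribution smears the integrand across $\partial A$ together with the convexity of $A$ and the affinity of $f$. Managing this boundary behaviour cleanly is where essentially all the work lies; once the $\bm{\mu}$-averaged integrand is shown concave on the relevant region, the second and third steps are routine.
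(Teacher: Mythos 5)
Your plan reproduces, step for step, the skeleton of the paper's own proof (backward induction; optimal controls $\bm{u}^1,\bm{u}^2$ at $\bm{x}^1,\bm{x}^2\in A$ supplied by Theorem \ref{exist}; convexity of the graph of $\mathbb{U}$ from Assumption \ref{struct} $(iii)$; affinity of $f$; splitting of the infimum over $\bm{\mu}$), so everything hinges on the step you single out as ``where essentially all the work lies'': joint concavity of $(\bm{x},\bm{u})\mapsto\int_{W_t}v_{t+1}(f(\bm{x},\bm{u},\bm{w}))\,d\bm{\mu}(\bm{w})$ when $v_{t+1}$ is concave only on $A$ and vanishes off $A$. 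You never prove this step, and in fact it cannot be proved from Assumptions \ref{sc} and \ref{struct}: the statement your plan needs is false in this generality. Take $n=m=l=1$, $A=[0,1]$, $\mathbb{U}(\bm{x})\equiv\{0\}$, $W_t=\{0\}$, hence $\mathbb{D}_t=\{\delta_0\}$ (a compact singleton, consistent with \eqref{a_set} for $\bold{m}_t=0$), and the affine dynamics $f(\bm{x},\bm{u},\bm{w})=2\bm{x}+\bm{w}$. All hypotheses of the proposition hold and $v_T=\bold{1}_{[0,1]}$ is concave on $A$, yet $v_{T-1}(\bm{x})=\bold{1}_A(\bm{x})\bold{1}_A(2\bm{x})=\bold{1}_{[0,1/2]}(\bm{x})$ on $A$, which is not concave on $A$: for $\bm{x}^1=0.4$, $\bm{x}^2=1$, $\lambda=1/2$ one gets $v_{T-1}(0.7)=0<1/2=\lambda v_{T-1}(\bm{x}^1)+(1-\lambda)v_{T-1}(\bm{x}^2)$. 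Nor does your hoped-for ``smearing'' across $\partial A$ rescue matters: with $W_t=[-\epsilon,\epsilon]$ and $\mathbb{D}_t$ the singleton containing the uniform distribution on $W_t$, one computes $v_{T-1}(\bm{x})=(1-2\bm{x}+\epsilon)/(2\epsilon)$ for $\bm{x}\in[(1-\epsilon)/2,(1+\epsilon)/2]$ and $v_{T-1}(\bm{x})=0$ for $\bm{x}\in[(1+\epsilon)/2,1]$, so $v_{T-1}$ has a convex kink strictly inside $A$ and again fails to be concave there. Averaging cannot help, both because nothing in the assumptions forces the disturbance to have a density and because, even when it does, concavity of the averaged integrand genuinely fails.

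The comparison with the paper is the instructive part. The paper's proof runs exactly along your lines, and at the critical moment it invokes ``the concavity of $v_{t+1}$'' pointwise at the points $f(\bm{x}^i,\bm{u}^i,\bm{w})$, which need not lie in $A$; there the induction hypothesis says nothing, and no extension of it can be true, since a nonnegative function that is positive somewhere and vanishes off the compact set $A$ is never concave on all of $\mathbb{R}^n$ (concavity along a ray leaving $A$ would force negative values). So your diagnosis of the boundary obstruction is correct and exposes a genuine flaw in the paper's own argument: Proposition \ref{concave} as stated needs an additional hypothesis to be true---for instance, a condition keeping the relevant successor states $f(\bm{x},\bm{u},\bm{w})$ inside $A$, or a weakening of the conclusion to quasi-concavity combined with structural restrictions on the distributions in $\mathbb{D}_t$. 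But identifying the obstruction is not the same as overcoming it: as a proof, your proposal stops exactly at the step that is false under the stated assumptions, so it does not establish the proposition---and neither, as written, does the paper.
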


Its proof is contained in the Appendix.  
Since $\bm{u} \mapsto v_t(f(\bm{x}, \bm{u}, \bm{w}))$ is concave for each $(t, \bm{x}, \bm{w}) \in \bar{\mathcal{T}} \times A \times W_t$ under Assumption \ref{struct}, the semi-infinite program in the dual Bellman equation is concave for each $(t, \bm{x}) \in \mathcal{T} \times A$.

\begin{remark}
When control actions are chosen from a discrete set, i.e., $\mathbb{U}(\bm{x})$ is a discrete set, the semi-infinite program in Theorem \ref{reform} can be formulated as a mixed-integer program, where  \emph{``$\max_{\bm{u}}$''} is a discrete-optimization problem and \emph{``$\sup_{\underline{\bm{\lambda}},\overline{\bm{\lambda}}, \bm{\Lambda}, \bm{\nu}}$''} is a continuous optimization problem.
In particular, under Assumption \ref{struct}, we can employ a linear approximation-based method to obtain an approximate solution with a provable suboptimality bound~\cite{Yang2016}.
\end{remark}

\section{Numerical Examples: Thermostatically Controlled Loads}\label{ex}

Thermostatically controlled loads (TCLs)---such as air conditioners, refrigerators, water heaters, and battery pack cooling systems---are used to guarantee
human comfort, and food provision and battery safety, etc.
Therefore, ensuring safe  TCL operation is critical in a wide range of applications.
We consider the following model of the temperature being controlled through a TCL:
\[
x_{t+1} = \alpha x_t + (1- \alpha) (\theta - \eta RP u_t) + w_t,
\]
which was originally developed by Mortensen and Haggerty \cite{Mortensen1988}.
Here, $x_t \in \mathbb{R}$ is the temperature of interest (e.g., indoor temperature, food temperature), $u_t \in \{0,1\}$ is an ON/OFF control input, and $w_t \in \mathbb{R}$ is a disturbance variable that takes into account environmental and/or human behavioral uncertainty.
Note also that $\alpha = \exp(-h/CR)$, where $C$ is the thermal capacitance (kWh/$^\circ$C), $R$ is the thermal resistance ($^\circ$C/kW) and $h$ is the time interval between stages $t$ and $t+1$. In addition, the parameter 
$P$ represents the range of energy transfer to or from the thermal mass (kW) and the coefficient $\eta$ is the control efficiency.
In our numerical experiments, the following parameters are used:
$R = 2$ $^\circ$C/kW, $C = 2$ kWh/$^\circ$C, $\theta = 32$ $^\circ$C, $h = 5/60$ hour, $P = 14$ kW, and $\eta = 0.7$.
We compute the distributionally robust safe sets and policies for $90$ minutes with 5-minute time intervals between two consecutive stages.
The desired safe set of temperature values is chosen as $A = [19, 22]$ ($^\circ$C).

We now assume that an empirical estimate of the first- and second-moments is the only available information about $w_t$ except its support.
Let $\bold{m}$ and $\bold{\Sigma}$ be the empirical mean and variance of $w_t$. As discussed in Section \ref{amb_moment}, it is reasonable to consider the following constraints:
$| \mathbb{E}_{\mu_t} [ w_t ]  - \bold{m} | \leq b$, and
$\mathbb{E}_{\mu_t} [ (w_t - \bold{m})^2 ] \leq c\bold{\Sigma}$, where
$b \geq 0$ and $c \geq 1$ are adjustable parameters depending on one's confidence in the estimate. We call them \emph{confidence parameters}.
We set $\bold{m} = 0$, $\bold{\Sigma} = 0.25^2$, and the disturbance's support $\mathcal{K} = [-\frac{1}{2}\sqrt{\bold{\Sigma}/12}, \frac{1}{2}\sqrt{\bold{\Sigma}/12}]$.

\begin{figure}
\begin{center}
\includegraphics[width=3.3in]{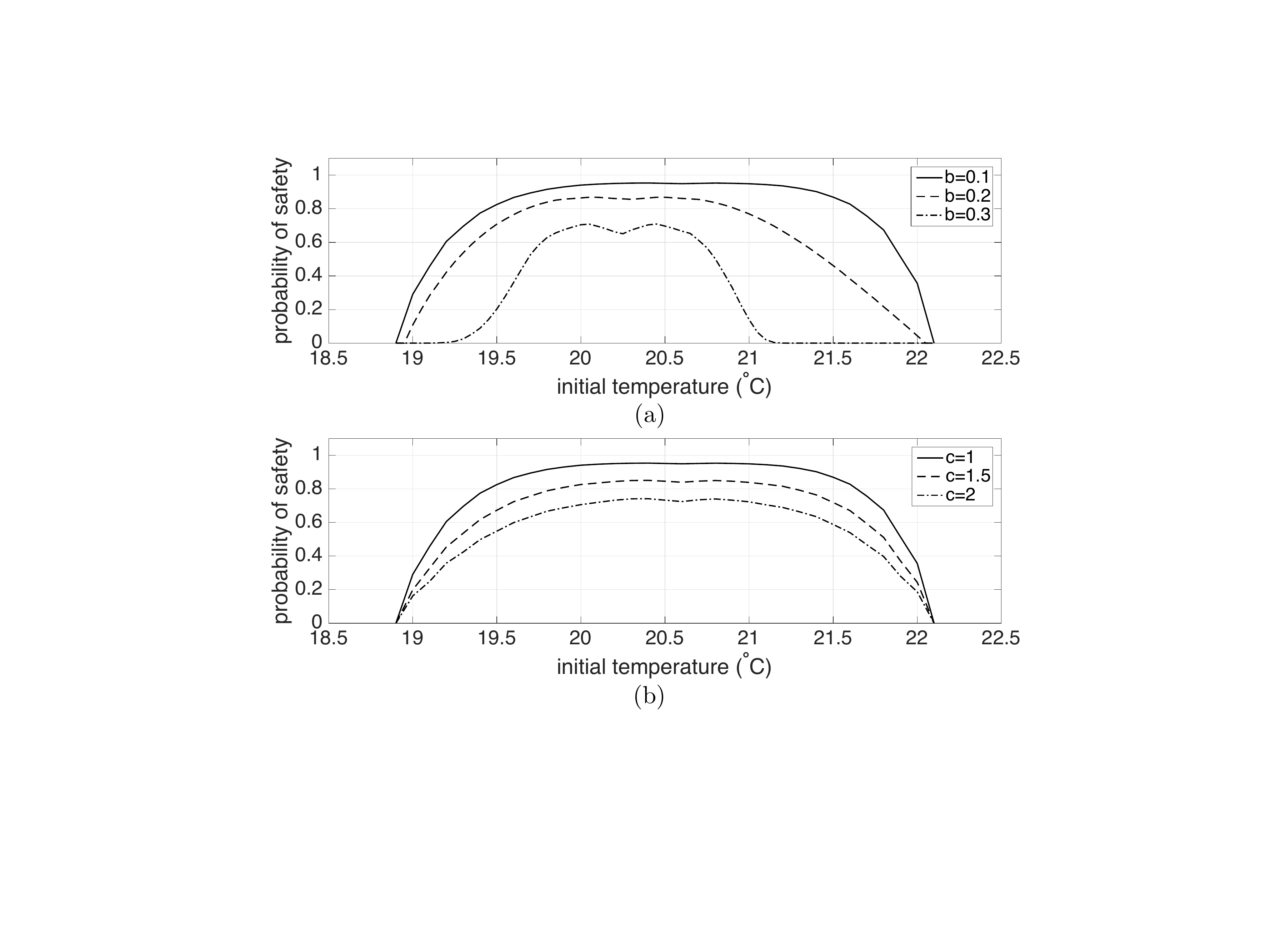}    
\caption{Effect of (a) the confidence parameter $b$ for the mean $\bold{m}$ and (b) the confidence parameter $c$ for the variance $\bold{\Sigma}$ on the probability of safety.}  
\label{fig:effect}                                 
\end{center}                                 
\end{figure}

\subsection{Effect of the Confidence Parameters}

Fig. \ref{fig:effect} shows the probability $P_{\bm{x}}^{\mbox{\tiny safe}}$ of safety as a function of the initial state $\bm{x} \in [18, 23]$ for multiple confidence parameters $b$ and $c$.
The function has a bimodal structure due to the discrete ON/OFF control inputs:  it can be considered as the point-wise maximum of the probability function with OFF control and that with ON control.
At initial states between the two peaks,
ON or OFF control input at time $0$  does not maximize the probability of safety.
Aditionally, as the initial state approaches the boundary of the set $A = [19,22]$,
the probability of safety decreases.

As shown in Fig. \ref{fig:effect} (a), given $c = 1$, the probability of safety decreases with the confidence parameter $b$ for the mean.
In other words, an inaccurate mean estimate $\bold{m}$ makes it difficult for the system to remain safe for all stages. 
Fig. \ref{fig:effect} (b) illustrates that the probability of safety decreases
as the uncertainty of the variance estimate $\bold{\Sigma}$ increases when $b = 0.1$.
The probability $P_{\bm{x}}^{\mbox{\tiny safe}}$ of safety scales down with the variance confidence parameter $c$ without any change in its support. 
On the other hand, an increase in the mean confidence parameter $b$ reduces the support of $P_{\bm{x}}^{\mbox{\tiny safe}}$.
This is because a change in  $b$  may shift the worst-case disturbance distribution  while a change in $c$ can only scale the worst-case distribution.

\subsection{Safety-Oriented Distributionally Robust Control}

To demonstrate the performance of the proposed safety-oriented distributionally robust controller, 
we compare it to a safety-oriented controller synthesized with standard probabilistic safe sets.
We use the safety-oriented controller design approach proposed in Section \ref{socon}.
When the control action is allowed to be arbitrarily chosen in~\eqref{soc}, we choose OFF control input to minimize the energy cost. In other words, it is an energy cost-minimizing safety-oriented controller.
Suppose that the true disturbance distribution is uniformly distributed over the support $\mathcal{K}$ with mean $\bold{m}$, and variance $\bold{\Sigma}$.
We consider the situation in which we misestimate the distribution as
a truncated normal distribution with the same support $\mathcal{K}$, mean $\bold{m}$ and variance $\bold{\Sigma}/2$.\footnote{The variance of the truncated normal distribution must be greater than the variance of the uniform distribution with the same support and mean.}
We set the probability threshold as $\alpha = 0.95$ and 
construct the probabilistic safe sets using the method proposed by Abate et al. \cite{Abate2008} with the inaccurately estimated distribution.
As shown in Fig. \ref{fig:temp} (a), 
the safety-oriented controller obtained using the probabilistic safe sets 
fails to guarantee that  the probability of safety will be greater than or equal to the threshold $\alpha = 0.95$. Specifically, in our numerical experiment with the initial state $\bm{x}=21$, $1$,$365$ of $10$,$000$ sample trajectories violated the safety constraints. 
Thus, the probability of the system being safe for all stages is only $0.8635$ even though the proposed safety-oriented controller is constructed in a conservative way for safety.
However, when the distributionally robust safe sets are employed to construct the safety-oriented controllers, only $5$ sample trajectories move out from the set $A = [19, 22]$. 
In other words, the probability of safety is $0.995$.\footnote{The resulting probability of safety is significantly greater than the threshold $0.95$ because the uniform distribution is not the worst-possible distribution.}

\begin{figure}
\begin{center}
\includegraphics[width=3.3in]{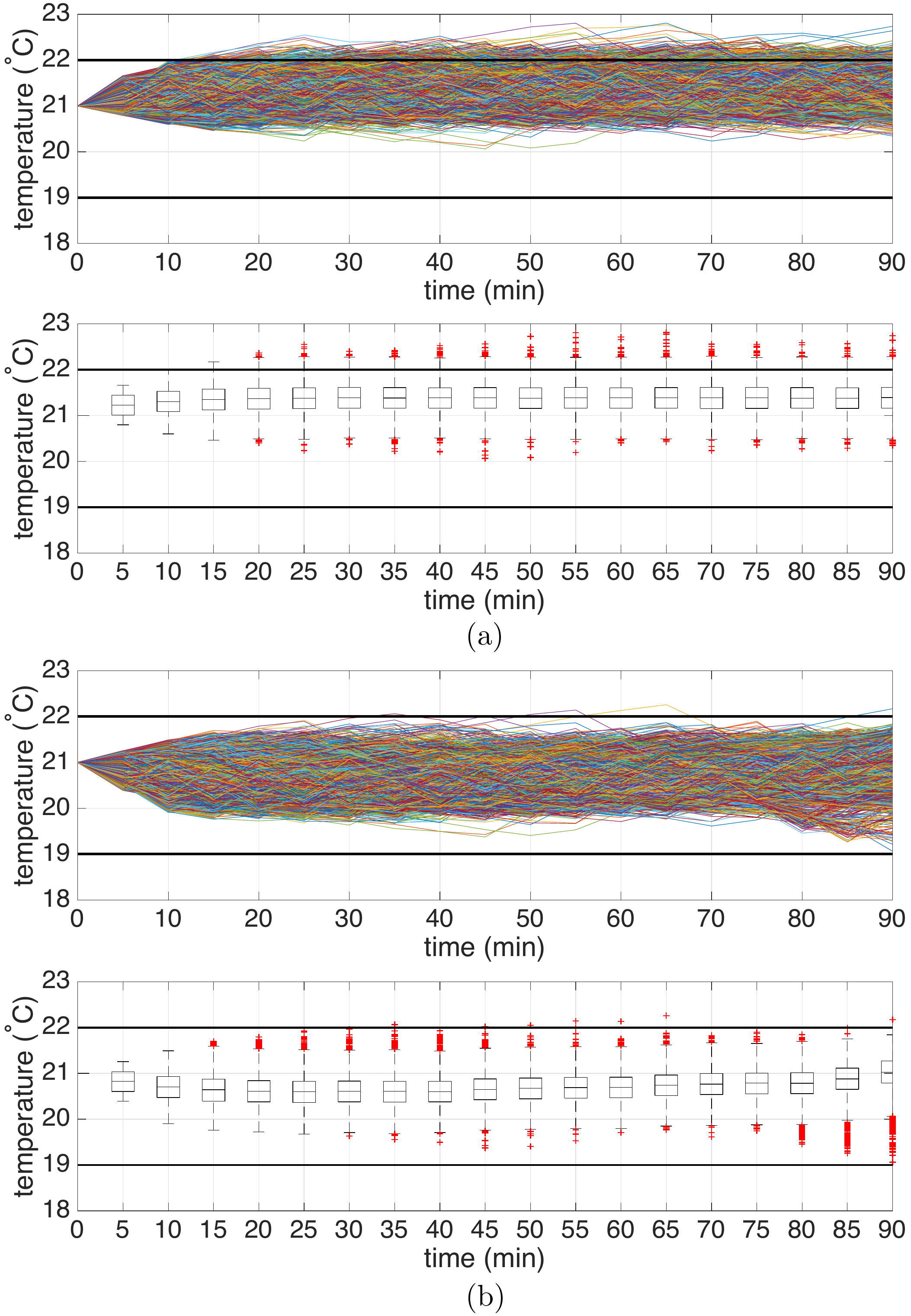}    
\caption{State trajectories and their Tukey box plot generated by the safety-oriented  controller obtained using (a) standard probabilistic safe sets  and (b) distributionally robust safe sets.
}  
\label{fig:temp}                                 
\end{center}                                 
\end{figure}

\section{Conclusions}

We proposed a dynamic game approach to computing distributionally robust safe sets and policies concerning ambiguous information about the probability distribution of disturbances. 
We identified conditions under which a Markov policy is an optimal distributionally robust safe policy.
Such a policy leads to a practical design method for safety-oriented stochastic controllers that guarantee that the probability of the system being safe for all remaining stages exceeds a pre-specified threshold, regardless of how the disturbance distribution is chosen in an ambiguity set.
We also proved that there is no duality gap in the inner minimization problem of the Bellman equation when ambiguity sets with moment uncertainty are considered. 
This strong duality result allows us to reformulate the infinite-dimensional minimax problem in the Bellman equation as a semi-infinite program.
Since the reformulated dual Bellman equation can be solved using existing convergent algorithms, the proposed dual formulation method alleviates computational issues that otherwise occur when solving the distributionally robust safety specification problem.
Through numerical simulations, we  demonstrated that 
the safety-oriented controller constructed using the distributionally robust safe sets and policies can guarantee the desired probability of safety even when the probability distribution of disturbances is inaccurately estimated; meanwhile, the same controller based on standard probabilistic safe sets cannot.

The following future directions are of interests to generalize and improve the proposed approach. First,  this method is readily applicable to backward reachability analysis. Given a target set $B$, the probability that reaches $B$ for some $t \in \bar{\mathcal{T}}$ given the strategy pair $(\pi, \gamma)$ and the initial value $\bm{x}$ can be expressed as
$P_{\bm{x}}^{\mbox{\tiny reach}}(\pi, \gamma; B) = \mathbb{E}^{\pi, \gamma} [\max_{t \in \bar{\mathcal{T}}} \bold{1}_B(x_t) ]$. We can then use dynamic programming and the same dual formulation to obtain a control policy that maximizes this probability. Similarly, the proposed distributionally robust approach can be extended to reach-avoid problems.
Second, the proposed method is compatible with other types of ambiguity sets.
The existence result and safety-oriented controller design approach remain valid when different ambiguity sets are considered.
However, different reformulations of associated Bellman equations are required to handle other ambiguity sets such as statistical distance-based ones by using different existing strong duality results. 
Third, this method presents a scalability issue arising from both dynamic programming and semi-infinite programming.
Applying and developing advanced computational techniques including approximate dynamic programming, and sampling- and moment-based approximation methods  is of great interests to alleviate the scalability issue.

\bibliographystyle{plain}        
\bibliography{reference}

\appendix
\section{Proof of Proposition \ref{concave}}

We use mathematical induction to prove this proposition. 
For $t = T$, $v_T(\bm{x}) = \bold{1}_A(\bm{x})$ is concave with respect to $\bm{x} \in A$. 
Suppose that $\bm{x} \mapsto v_s(\bm{x})$ is concave for  $\bm{x} \in A$ for $s = T-1, \cdots, t+1$.
Fix arbitrary $\bm{x}^1, \bm{x}^2 \in \mathbb{R}^n$ and $\lambda \in (0,1)$. Let $\bm{u}^i \in \mathbb{U}(\bm{x}^i)$ be a solution to the outer maximization problem in \eqref{dp} at $(t, \bm{x}^i)$ for $i = 1,2$.
We also let $\bm{x}^\lambda := \lambda \bm{x}^1 + (1-\lambda) \bm{x}^2$ and $\bm{u}^\lambda := \lambda \bm{u}^1 + (1-\lambda) \bm{u}^2$.
Since $A$ is a convex set,
$\bm{x}^\lambda \in A$.
In addition,  Assumption~\ref{struct}~$(iii)$ implies that
$\bm{u}^\lambda \in \mathbb{U} (\bm{x}^\lambda)$.
Therefore, we have that
\begin{equation} \nonumber
\begin{split}
v_t(\bm{x}^\lambda) \geq \inf_{\bm{\mu} \in \mathbb{D}_t} \int_{\mathbb{R}^l} v_{t+1} (f(\bm{x}^\lambda, \bm{u}^\lambda, \bm{w})) d\bm{\mu}(\bm{w}).
\end{split}
\end{equation}
Since $f$ is an affine function, for each $\bm{w} \in \mathbb{R}^l$,
$f(\bm{x}^\lambda, \bm{u}^\lambda, \bm{w}) = 
\lambda f(\bm{x}^1, \bm{u}^1, \bm{w}) + (1-\lambda) f(\bm{x}^2, \bm{u}^2, \bm{w})$.
Combining this with the concavity of $v_{t+1}$, we further have that
\begin{equation}\nonumber
\begin{split}
v_t(\bm{x}^\lambda) &\geq \inf_{\bm{\mu} \in \mathbb{D}_t}  \int_{\mathbb{R}^l}  \lambda \big [ v_{t+1} (f(\bm{x}^1, \bm{u}^1, \bm{w})) \\
& \qquad \qquad\;+  (1-\lambda)  v_{t+1} (f(\bm{x}^2, \bm{u}^2, \bm{w})) \big ] d\bm{\mu}(\bm{w})\\
&\geq \lambda \inf_{\bm{\mu} \in \mathbb{D}_t}  \int_{\mathbb{R}^l} v_{t+1} (f(\bm{x}^1, \bm{u}^1, \bm{w})) d\bm{\mu}(\bm{w})\\
& +  (1-\lambda) \inf_{\bm{\mu} \in \mathbb{D}_t}  \int_{\mathbb{R}^l} v_{t+1} (f(\bm{x}^2, \bm{u}^2, \bm{w})) d\bm{\mu}(\bm{w})\\
&= \lambda v_{t+1} (\bm{x}^1) + (1-\lambda) v_{t+1} (\bm{x}^2),
\end{split}
\end{equation}
which implies that $v_t$ is concave. 
This completes our inductive argument.~\qed

\end{document}